\theoremstyle{plain}
\newtheorem{theo}{Theorem}[section]
\newtheorem{lem}[theo]{Lemma}
\theoremstyle{definition}
\theoremstyle{remark}
\begin{document}
\title{Wheels in planar graphs and Haj\'{o}s graphs  \footnote{Partially supported by NSF grant DMS-1600738.
Email: qqxie@fudan.edu.cn (Q. Xie), shijie.xie@gatech.edu (S. Xie), yu@math.gatech.edu (X. Yu), and  xyuan@gatech.edu (X. Yuan)}}

\author{Qiqin Xie\\ Shanghai Center of Mathematics\\ Fudan University\\Shanghai, China, 200438\\
\medskip\\
Shijie Xie, Xingxing Yu, and Xiaofan Yuan\\
School of Mathematics\\
Georgia Institute of Technology\\
Atlanta, GA 30332--0160, USA }

\date{November 5, 2019}

\maketitle

\begin{abstract}

It was conjectured by
Haj\'{o}s that graphs containing no $K_5$-subdivision are 4-colorable. Previous results show that any possible minimum counterexample to Haj\'{o}s'
conjecture, called Haj\'{o}s graph, is 4-connected but not 5-connected. In this paper, we show that if a Haj\'{o}s graph
admits a 4-cut or 5-cut with a planar side  then the planar side must be small  or contains a special wheel.
This is a step in our effort to reduce Haj\'{o}s' conjecture to the Four Color Theorem.

\bigskip

AMS Subject Classification: 05C10, 05C40, 05C83

Keywords: Wheels, coloring, graph subdivision, disjoint paths
\end{abstract}


\section{Introduction}

A  \textit{wheel} is a graph which consists of a cycle,  a vertex not on the cycle (known as the {\it center} of the wheel),
and at least three edges from the center to the cycle. Wheels have played important roles in studing graph structures, e.g.,
Tutte's characterization of 3-connected graphs \cite{Tu61}. Recently,
Aboulker, Chudnovsky, Seymour, and Trotignon \cite{ACST15} characterized 3-connected planar graphs that do not contain a wheel as an induced
subgraph, and used it to show that planar graphs contain no induced wheel are 3-colorable.
In \cite{MY10}, wheels are used to prove that certain 5-connected graphs
contain  a subdivision of $K_5$.

We are interested in the question on when a planar graph contains a wheel that can be extended by disjoint paths to a given set of vertitces. This was motivated by Haj\'{o}s' conjecture that graphs
containing no subdivisions of $K_5$ are 4-colorable -- one of the two remaining open cases of a more general conjecture made by
Haj\'{o}s in the 1950s (see \cite{Th05}, although reference
\cite{Ha61} is often cited.)
Note that if $W$ is a wheel with center $w$ and $w_1,w_2,w_3,w_4$ are distinct
neighbors of $w$ that  occur on $W-w$ in cyclic order, then we can form a subdivision of $K_5$ by adding disjoint paths from $w_1,w_2$ to $w_3,w_4$, respectively, that are also internally
disjoint from $W$.

We say that a graph $G$ is a {\it Haj\'{o}s graph} if
\begin{itemize}
	\item [(1)] $G$ contains no $K_5$-subdivision,
	\item [(2)] $G$ is not 4-colorable, and
	\item [(3)] subject to (1) and (2), $|V(G)|$ is minimum.
\end{itemize}
Thus, if no Haj\'{o}s graph exists then graphs containing no
$K_5$-subdivisions are 4-colorable.
Haj\'{o}s graphs must be 4-connected  \cite{Yu06} but not 5-connected \cite{KSC1, KSC2, KSC3, KSC4}. 
Our goal is to derive more information about Haj\'{o}s graphs in an attempt to reduce  Haj\'{o}s' conjecture to the Four Color Theorem.

To state our result precisely, we need some notation. Let $G_1$, $G_2$
be two graphs.
We use $G_1 \cup G_2$ (respectively, $G_1\cap G_2$) to denote the graph with vertex set
$V(G_1) \cup V(G_2)$ (respectively, $V(G_1)\cap V(G_2)$) and edge set
$E(G_1) \cup E(G_2)$ (respectively, $E(G_1)\cap E(G_2)$).   Let $G$ be a
graph and $k$ a nonnegative integer; then a \textit{k-separation} in $G$ is a pair $(G_1,G_2)$ of edge-disjoint
subgraphs $G_1,G_2$ of $G$ such that $G = G_1 \cup G_2$,  $|V(G_1\cap G_2)|=k$, and $E(G_i)\cup
(V(G_i)\setminus V(G_{3-i}))\ne \emptyset$ for $i=1,2$.

For a wheel $W$ with center $w$ in a graph $G$ and for any $S\subseteq V(G-w)$, we say that $W$ is {\it $S$-good} if $S\cap V(W)\subseteq N_G(w)$, where
$N_G(w)$ denotes the set of neighborhood of $w$ in $G$.
 In Figure~\ref{obstructions}, we list six graphs drawn in a closed disc with $S$ consisting of five vertices on the boundary
of the disc.
Note that none of these graphs contains  an $S$-good wheel.
\begin{figure}[h]
        \includegraphics[width=\textwidth]{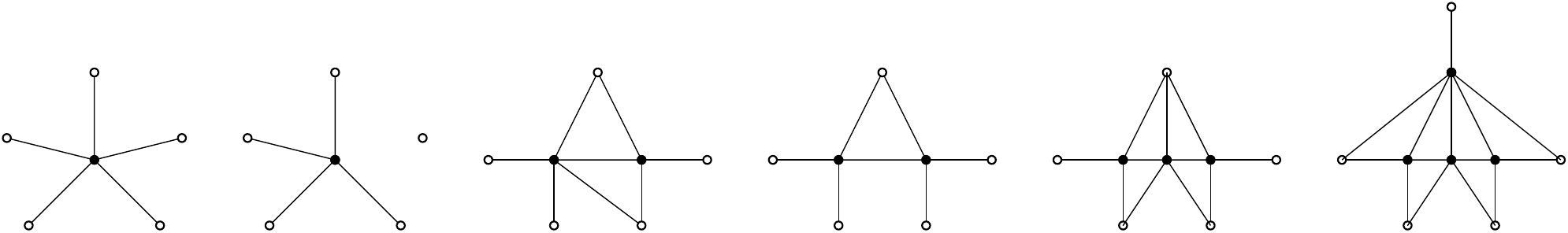}
           \caption{Obstructions to good wheels.}
             \label{obstructions}
  \centering
\end{figure}

Given a graph $G$ and $S\subseteq V(G)$, we say that $(G,S)$ is {\it planar} if it can be drawn in a closed disc in the plane
with no edge crossings and $S$ is contained in the boundary of the disc.
Our result can be stated as follows, it will be used in a subsequent paper to show that
Haj\'os graph has no 4-separation $(G_1, G_2)$ such that $(G_1, V(G_1\cap G_2))$ is planar and $|V(G_1)|\ge 6$, a step in an attempt to reduce the
Haj\'{o}s conjecture to the Four Color Theorem. However, we need to
consider 5-separations as well when we try to extend a wheel to a subdivision of $K_5$.

\begin{theo}\label{main}
Let $G$ be a Haj\'os graph and  $(G_1, G_2)$ be a separation in $G$ of order 4 or 5 such that $(G_1,V(G_1\cap G_2))$ is planar and 
$V(G_1)\setminus V(G_2)\ne \emptyset$.
Then  one of the following holds:
\begin{itemize}
	\item $G_1$ contains a $V(G_1\cap G_2)$-good wheel.
	\item $|V(G_1\cap G_2)|=4$ and $|V(G_1)|=5$.
	\item $|V(G_1\cap G_2)|=5$, $G_1$ is one of the graphs in Figure~\ref{obstructions}, and if $|V(G_1)|=8$ then the degree 3 vertex in $G_1$  has degree at least 5 in $G$.
\end{itemize}
\end{theo}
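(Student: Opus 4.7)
The plan is to work with a planar embedding of $G_1$ in a closed disc $D$ with $S := V(G_1\cap G_2)$ on $\partial D$ in some cyclic order, and to exploit the structural consequences of $G$ being a Hajós graph. In particular, $G$ is 4-connected, so every vertex of $V(G_1)\setminus S$ has degree at least $4$ in $G_1$ (its neighbors all lie in $V(G_1)$), and a cut vertex of $G_1$ would yield a cut of $G$ of order at most $3$, so I may assume $G_1$ is 2-connected. Minimum-counterexample arguments on $G$ further rule out small reducible configurations in $V(G_1)\setminus S$ (for example a contractible edge or a low-degree vertex whose local neighborhood admits a 3-coloring that extends), which will be invoked whenever a small $G_1$ candidate needs to be eliminated.

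First I would dispose of the degenerate cases. The hypothesis $V(G_1)\setminus V(G_2)\ne\emptyset$ forces $|V(G_1)\setminus S|\ge 1$. If $|V(G_1)\setminus S|=1$ with $|S|=4$, the second bullet holds. If $|V(G_1)\setminus S|=1$ with $|S|=5$, the unique interior vertex $v$ has degree at least four among $s_1,\ldots,s_5$, and a direct check on which four or five $s_i$ are adjacent to $v$ either produces an $S$-good wheel or places $G_1$ in the list of Figure~\ref{obstructions}. So the substantive case is $|V(G_1)\setminus S|\ge 2$.

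The core of the argument is to build an $S$-good wheel by an innermost-vertex selection. I would look for a vertex $w\in V(G_1)\setminus S$ whose closed neighborhood is, in the planar embedding, surrounded by a small cycle $C$ that uses only internal vertices together with those $s_i\in N_{G_1}(w)$ (and no other $s_j$). Such a $C$ is obtained by walking around $w$ face-by-face in the planar embedding and splicing in short paths of $G_1 - w$ supplied by 2-connectivity via Menger's theorem. Because planarity pins down the cyclic order of $N_{G_1}(w)$, the rim cycle $C$ automatically traverses each $s_i\in N_{G_1}(w)$ in the correct cyclic position and avoids $S\setminus N_{G_1}(w)$. Combined with spokes from $w$ to its (at least three) neighbors on $C$, this yields an $S$-good wheel.

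If no choice of $w$ yields such a $C$, the failure must come from a small separation of $G_1$ blocking the required rim. Using the 4-connectivity of $G$, any such blocker decomposes into a 3-cut of $G_1$ containing at least two vertices of $S$, or into an analogous planar obstruction incident to $S$. I would then confine $V(G_1)\setminus S$ to a bounded region and carry out a finite case analysis matching the six configurations in Figure~\ref{obstructions}. The degree condition in the $|V(G_1)|=8$ case is handled separately: its degree-3 vertex must lie in $S$ (since interior vertices of $G_1$ have degree at least $4$), and if it had only one neighbor in $V(G_2)\setminus S$, hence total degree $4$ in $G$, then a 4-coloring obtained by minimality of $G$ on a suitably modified graph would extend to $G$, contradicting non-4-colorability. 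The main obstacle I expect is precisely this last enumeration: translating ``no good wheel'' into one of six specific pictures requires a meticulous case analysis of how 3-separations of $G_1$ can interact with the cyclic order of $S$ on $\partial D$, and I anticipate that a substantial portion of the paper will be devoted to this classification, likely broken into a sequence of auxiliary lemmas.
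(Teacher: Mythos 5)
Your setup (planar embedding with $S:=V(G_1\cap G_2)$ on the boundary, 4-connectivity of $G$ from minimality, disposal of the $|V(G_1)\setminus S|=1$ cases) matches the paper, but the core of your plan has a genuine gap: you treat the classification as essentially structural, expecting that once no ``innermost'' vertex yields a wheel, a finite case analysis of how small separations meet $S$ will land you in the six pictures of Figure~\ref{obstructions}. That is false as a purely structural statement. There are planar-side configurations with no $S$-good wheel that are not among the six obstructions; for instance, in the 4-separation case the configuration where $G_1-S$ is a 4-cycle $u_1u_2u_3u_4$ (possibly with the chord $u_2u_4$) and $t_i$ is joined to $u_{i-1},u_i$ has eight vertices, satisfies all your local conditions, and contains no good wheel; similarly, in the 5-separation case one gets outerplanar interiors of up to ten vertices (the Subcase~2.1--2.6 configurations of the paper) that have no good wheel. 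These cannot be matched to Figure~\ref{obstructions}; they must be shown not to occur in a Haj\'os graph. The paper's engine for this---and the bulk of its length---is a family of reducibility arguments: delete the offending interior (and sometimes a cut vertex), add a few edges or an apex vertex to form $G'$, show $G'$ has no $K_5$-subdivision by explicitly rerouting through the deleted part, conclude $G'$ is 4-colorable by minimality of $G$, and then extend the coloring back to $G$, contradicting non-4-colorability. Your proposal only gestures at this (``small reducible configurations ... will be invoked whenever a small $G_1$ candidate needs to be eliminated''), but without specifying these constructions the main cases of the proof are missing; they are not small perturbations but the heart of the argument, including the final $K_5$-rerouting case analysis needed when the 9-vertex obstruction sits inside a larger $G_1$.

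A second, more local problem is your wheel-finding step. It is not true that for an arbitrary interior vertex $w$ one can splice, via Menger in $G_1-w$, a rim cycle passing exactly through $N_{G_1}(w)$ and avoiding $S\setminus N_{G_1}(w)$; the rim may be forced through boundary vertices not adjacent to $w$. The paper instead works with $D:=G_1-S$, proves $D$ is 2-connected (using 4-connectivity together with the already-established small-case lemmas, not merely ``a cut vertex gives a 3-cut of $G$,'' which is not valid when both sides of the cut meet $S$), and then takes as center either a vertex of $D$ not on the outer cycle of $D$, or a vertex in the interior of an arc of that outer cycle that has an extra neighbor in $S$; in those situations the cofacial vertices and edges automatically form an $S$-good wheel. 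When no such vertex exists, one is in the bounded configurations above, which is exactly where the coloring/$K_5$-subdivision reductions, and the auxiliary small-graph lemmas (the $|V(G_1)|=6$ case for 4-cuts and the $K_3$/8-vertex cases for 5-cuts), take over. Your treatment of the degree condition for the 8-vertex obstruction (identify/modify, 4-color, extend) is in the right spirit, but it is the same kind of reduction you would need, in many more places, to make the rest of the argument go through.
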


In Section 2, we deal with several cases when $G_1$ is small. In Section 3, we deal with 4-separations, and in Section 4, we deal with 5-separations.

It will be convenient to use a sequence of vertices to represent a path, with consecutive vertices representing an edge in the path.
Let $G$ be a graph. For $S\subseteq V(G)$, and for any set $T$ of $2$-element subsets of $V(G)$, 
we use $G-(S\cup T)$ to denote the subgraph of $G$ with $V(G-(S\cup T))=V(G) \setminus  S$ and $E(G-(S\cup T))=E(G[V(G)\setminus S])\setminus T$, where $G[V(G)\setminus S]$ denotes the subgraph of $G$ induced by vertices in $V(G)\setminus S$,
and write $G-x$ when $S=\{x\}$ and $T=\emptyset$. 
For any set $S$ disjoint from $V(G)$ and any set $T$ of 2-element subsets of $V(G)\cup S$, we use
$G+(S\cup T)$ to denote the graph with $V(G+(S\cup T))=V(G)$ and $E(G+(S\cup T))=E(G)\cup T$, and write $G+xy$ if $S=\emptyset$ and $T=\{\{x,y\}\}$ with $x,y\in V(G)$.
Let $C$ be a cycle in a plane graph, and let  $u,v\in V(C)$. If $u=v$  let $uCv=u$, and if $u\ne v$ let
$uCv$ denote the subpath of $C$ from $u$ to $v$ in clockwise order.


\section{Small graphs}

In this section, we consider situations when a Haj\'{o}s graph has a separation of order at most 5 and one side of the separation
is a special small graph.  We need the following result from \cite{Yu06}.

\begin{lem} [Yu and Zickfeld] \label{4conn}
Haj\'{o}s graphs are 4-connected.
\end{lem}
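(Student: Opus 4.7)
The plan is to argue by contradiction from minimality: assume a Hajós graph $G$ admits a $k$-separation $(G_1,G_2)$ with $k\le 3$ and $V(G_i)\setminus V(G_{3-i})\ne\emptyset$ for $i=1,2$, and derive either a proper 4-coloring of $G$ or a $K_5$-subdivision in $G$. The recurring strategy is to augment each side on the separator (by adding the missing edges among the cut vertices, possibly together with a small gadget), producing a strictly smaller $K_5$-subdivision-free graph; 4-color it by minimality; and paste the two colorings after a permutation that aligns them on the common vertices. Augmentation preserves $K_5$-freeness because any $K_5$-subdivision in the augmented side must use the new edges, which can be rerouted through internally disjoint $u_i$-$u_j$ paths in the other side to yield a genuine $K_5$-subdivision of $G$. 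The cases $k\le 1$ are immediate: each $G_i$ is already strictly smaller and $K_5$-subdivision-free, so 4-colorable by minimality, and the two colorings combine directly.

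For $k=2$ with $V(G_1 \cap G_2) = \{u,v\}$, I would set $G_i' := G_i + uv$. Any $K_5$-subdivision in $G_i'$ uses the edge $uv$ at most once, and replacing it with a $u$-$v$ path through $G_{3-i}$ (which exists because $G$ is connected and $\{u,v\}$ is a cut) yields a $K_5$-subdivision of $G$, contradicting the defining property of $G$. So $G_i'$ is $K_5$-free and strictly smaller than $G$, hence 4-colorable by minimality; the edge $uv$ forces $u$ and $v$ to take distinct colors, and I permute and paste to color $G$.

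For $k=3$ with $V(G_1 \cap G_2) = \{u_1,u_2,u_3\}$, I would set $G_i^\ast := G_i + \{u_1u_2, u_2u_3, u_1u_3\}$. If each $G_i^\ast$ is $K_5$-subdivision-free, then minimality gives 4-colorings of both in which the triangle $u_1u_2u_3$ forces $u_1,u_2,u_3$ to receive three distinct colors; these colorings are then aligned by a color permutation and pasted into a 4-coloring of $G$.

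The main obstacle is verifying that $G_i^\ast$ in the 3-cut argument is indeed $K_5$-subdivision-free: one must reroute each added edge used in a hypothetical $K_5$-subdivision of $G_i^\ast$ through a path in $G_{3-i}$, which may require up to three internally disjoint $u_i$-$u_j$ paths in $G_{3-i}$, and these paths must stay disjoint from the portion of the subdivision living in $G_i$. I would establish the existence of these paths by a Menger-type argument inside $G_{3-i}$, exploiting that $\{u_1,u_2,u_3\}$ is a minimum cut of $G$ together with the connectivity already known between each $u_j$ and the interior of $G_{3-i}$. The small boundary cases — for instance when $|V(G_i)|$ is only $4$ or $5$, where the companion side may lack the full linkage and the triangle augmentation may simply coincide with $G$ in vertex count — I would dispatch by direct enumeration, checking in each configuration that $G$ is either 4-colorable by inspection or contains an explicit $K_5$-subdivision built from $G_i$ and two or three carefully chosen paths through $G_{3-i}$.
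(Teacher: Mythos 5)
You should first note that this paper does not prove Lemma~\ref{4conn} at all: it is imported verbatim from Yu and Zickfeld \cite{Yu06}, and the reason that result warranted its own paper is precisely that the 3-cut case resists the augment-and-paste argument you outline. Your treatment of $k\le 2$ is fine (a single added edge $uv$ is rerouted through one $u$--$v$ path internally contained in the other side, which exists by 2-connectedness, and the pasting works). The genuine gap is at the step you yourself flag for $k=3$: to conclude that $G_i^\ast=G_i+\{u_1u_2,u_2u_3,u_1u_3\}$ has no $K_5$-subdivision, you must, in the worst case, reroute all three new edges through $G_{3-i}$ by three pairwise internally disjoint paths joining the pairs $u_1u_2$, $u_2u_3$, $u_1u_3$; their union is a cycle of $G_{3-i}$ through $u_1,u_2,u_3$, and no Menger-type argument produces such a cycle inside one side of the separation. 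Menger/3-connectedness of $G$ only gives a fan from a single vertex of $G_{3-i}$ to the cut, and that is not enough: if the interior of $G_{3-i}$ is a single vertex adjacent to $u_1,u_2,u_3$, or a path $ab$ with $a$ adjacent to $u_1,u_2$ and $b$ to $u_2,u_3$, the required linkage does not exist (even rerouting just two of the three edges fails, since the two paths would share an interior vertex). Nor is this confined to the ``small boundary cases'' you propose to dispatch by enumeration: the side lacking the cycle through the cut can sit opposite an arbitrarily large side, so the failure occurs for the augmentation of the large side, where enumeration is impossible.

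When the rerouting fails, minimality of $G$ tells you nothing about $G_i^\ast$, and the natural fallback gadget (a new apex vertex joined to $u_1,u_2,u_3$) does preserve $K_5$-subdivision-freeness --- an apex of degree 3 cannot be a branch vertex of a $K_5$-subdivision, so it is traversed by at most one path, which is rerouted by a single $u_j$--$u_k$ path in the other side --- but it no longer forces $u_1,u_2,u_3$ to receive three distinct colors, so the colorings of the two sides need not be compatible on the cut (one side may admit only colorings in which the three cut vertices are rainbow, the other only colorings in which two of them coincide). Reconciling exactly this tension --- triangle augmentation breaks the $K_5$-freeness argument, apex augmentation breaks the pasting --- is the substance of Yu and Zickfeld's proof, which proceeds by a considerably more delicate analysis of what a $K_5$-subdivision in an augmented side forces inside $G$ itself. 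As written, your proposal assumes this central difficulty away, so it is not a complete proof.
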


We first deal with  4-separations in a Haj\'{o}s graph with one side having six vertices.

\begin{lem}\label{4cut-k2}
 Let $G$ be a Haj\'os graph and let $(G_1,G_2)$ be a $4$-separation in
    $G$ such that $(G_1,V(G_1\cap G_2))$ is planar. Then $|V(G_1)|\ne 6$.
\end{lem}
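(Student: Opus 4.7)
Suppose toward contradiction that $|V(G_1)|=6$; write $V(G_1)\setminus V(G_2)=\{u,w\}$ and $V(G_1\cap G_2)=\{v_1,v_2,v_3,v_4\}$. The plan is to first use the 4-connectivity of $G$ (Lemma~\ref{4conn}) together with the planarity hypothesis to pin down the adjacencies of $u$ and $w$, and then to derive a contradiction by a short reducibility argument that replaces $G_1$ with a small gadget on the cut.

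For the structural step, note that $u,w\notin V(G_2)$ forces $N_G(u),N_G(w)\subseteq V(G_1)$, so by Lemma~\ref{4conn} each of $u,w$ is adjacent in $G_1$ to at least four of the other five vertices of $V(G_1)$. I would first rule out the case in which $u$ (or $w$) is adjacent to all four $v_i$: the spokes $uv_1,\dots,uv_4$ partition the disc into four sectors, and $w$ (being internal) lies in a single sector whose boundary contains only $u$ and two of the $v_i$, contradicting $\deg_G(w)\ge 4$. Hence each of $u,w$ has exactly three neighbors among the $v_i$ and is adjacent to the other. Next, if $u$ and $w$ share the same three neighbors, say $\{v_1,v_2,v_3\}$, then $G_1$ contains $K_{2,3}$ with parts $\{u,w\}$ and $\{v_1,v_2,v_3\}$, and every face of any planar embedding of $K_{2,3}$ has only two of $\{v_1,v_2,v_3\}$ on its boundary; since any face of $G_1$ is contained in a face of the $K_{2,3}$-subdrawing, no face of $G_1$ has all of $v_1,v_2,v_3$ on its boundary, contradicting the hypothesis that $v_1,v_2,v_3,v_4$ lie on the outer face. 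Therefore, after relabeling, $u$ is adjacent to $\{v_1,v_2,v_3,w\}$ and $w$ is adjacent to $\{v_1,v_2,v_4,u\}$. A further check on the planar embedding---the only admissible cyclic order of the cut vertices on the outer face is $v_1,v_3,v_2,v_4$, forcing $v_3$ and $v_4$ onto opposite sides of the 4-cycle $uv_1wv_2$---gives $v_3v_4\notin E(G_1)$. The key takeaway is that the path $v_3,u,w,v_4$ lives entirely in $G_1$ with internal vertices $u,w$ outside $V(G_2)$.

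For the reducibility step, let $H$ be the graph on $\{v_1,v_2,v_3,v_4\}$ whose edge set is $\{v_3v_4\}$ together with every edge of $G_1$ among the $v_i$, and set $G'=G_2\cup H$. Then $V(G')=V(G_2)$, so $|V(G')|=|V(G)|-2<|V(G)|$, and by the minimality clause of the definition of Haj\'os graph, $G'$ either admits a proper 4-coloring or contains a $K_5$-subdivision. Suppose first that $\phi$ is a proper 4-coloring of $G'$. Then $\phi(v_3)\ne\phi(v_4)$ by the added edge, and $\phi$ respects all edges of $G_1$ among the $v_i$ by construction. A direct case analysis on $\{\phi(v_1),\phi(v_2),\phi(v_3)\}$ and $\{\phi(v_1),\phi(v_2),\phi(v_4)\}$ yields colors for $u$ and $w$ extending $\phi$ to a proper 4-coloring of $G$: the only obstruction would be $\phi(v_1)\ne\phi(v_2)$ together with $\phi(v_3)=\phi(v_4)\notin\{\phi(v_1),\phi(v_2)\}$, which is precluded by $\phi(v_3)\ne\phi(v_4)$. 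The resulting 4-coloring of $G$ contradicts the Haj\'os hypothesis.

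Suppose instead that $G'$ contains a $K_5$-subdivision $K$. If $K$ does not use the edge $v_3v_4$, then $E(K)\subseteq E(G_2)\cup(E(G_1)\cap\binom{\{v_1,v_2,v_3,v_4\}}{2})\subseteq E(G)$, so $K$ is already a $K_5$-subdivision of $G$, contradicting clause~(1) of the definition of Haj\'os graph. Otherwise $K$ uses $v_3v_4$; replace that edge in $K$ by the path $v_3,u,w,v_4$ in $G_1$. Since $V(K)\subseteq V(G')=V(G_2)$ while $u,w\in V(G_1)\setminus V(G_2)$, the internal vertices of this path lie outside $V(K)$, and the modified configuration is a bona fide $K_5$-subdivision of $G$, again a contradiction. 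The main obstacle is the initial structural case analysis---in particular, verifying the sector and $K_{2,3}$ arguments for every admissible planar embedding, and checking that the forced cyclic order rules out $v_3v_4\in E(G_1)$; once the structure of $G_1$ is pinned down, the reducibility step using $G'$ is uniform and succinct.
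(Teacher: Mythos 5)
Your proposal is correct and follows essentially the same route as the paper: after forcing $uw\in E(G)$ with each of $u,w$ adjacent to exactly three cut vertices sharing exactly two of them, you delete $u,w$, add an edge joining their two private cut-neighbors, rule out a $K_5$-subdivision by rerouting through the path $v_3uwv_4$, and extend a 4-coloring of the reduced graph back to $u,w$. The only difference is that you spell out the structural step (which the paper dispatches with ``we may further assume'') and note the inessential fact $v_3v_4\notin E(G_1)$; otherwise the reduction and coloring analysis coincide with the paper's.
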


\begin{proof}
For, suppose $|V(G_1)|=6$. Let $V(G_1)\setminus V(G_1\cap G_2)=\{u,v\}$ and $V(G_1\cap G_2)=\{v_i : i\in [4]\}$, and
assume that $G_1$ is drawn in a closed disc in the plane with no edge crossings such that $v_1, v_2, v_3, v_4$ occur on the boundary of a disc
in clockwise order.
 Then, since
$G$ is 4-connected (by Lemma~\ref{4conn}), we may further assume
that $N_G(u)=\{v_1,v_2,v_3,v\}$ and $N_G(v)=\{v_1,v_3,v_4,u\}$.

Now $G':=G-\{u,v\}+v_2v_4$ contains no $K_5$-subdivision. For, if $T'$ is a
$K_5$-subdivision in $G'$, then
$(T'-v_2v_4)\cup v_2uvv_4$ (and, hence, $G$) contains a $K_5$-subdivision,
a contradiction.

Thus $G'$ has a  4-coloring, say  $\sigma$. If $\sigma(v_2)\in
\{\sigma(v_1),\sigma(v_3)\}$ then we can extend $\sigma$ to a
4-coloring of $G$ by greedily coloring $v,u$ in order. If $\sigma(v_2)\notin
\{\sigma(v_1),\sigma(v_3)\}$ then we can extend $\sigma$ to a
4-coloring of $G$ by coloring $v$ with $\sigma(v_2)$ and coloring $u$
greedily. Either way, we obtain a contradiction to the assumption that $G$ is a Haj\'os graph.
\end{proof}

Next two lemmas deal with 5-separations in Haj\'{o}s graphs with 8 vertices on one side.

\begin{lem}\label{5cuttri}
    Let $G$ be a Haj\'os graph and let $(G_1,G_2)$ be a 5-separation in
    $G$ such that $(G_1,V(G_1\cap G_2))$ is planar.
    Then $G_1-V(G_1\cap G_2) \not \cong K_3$.
\end{lem}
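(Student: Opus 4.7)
\emph{Setup and structural observations.} Assume for contradiction that $G_1-V(G_1\cap G_2)\cong K_3$, so $|V(G_1)|=8$. Write $V(G_1\cap G_2)=\{v_1,\ldots,v_5\}$ appearing on the boundary of the disc in this clockwise order, and let $\{u,v,w\}=V(G_1)\setminus V(G_1\cap G_2)$ induce the triangle. By Lemma~\ref{4conn}, $G$ is 4-connected; since $V(G_1)\setminus V(G_2)\supseteq\{u,v,w\}$ we have $N_G(x)\subseteq V(G_1)$ for each $x\in\{u,v,w\}$, and so each of $u,v,w$ has at least two neighbors in $\{v_1,\ldots,v_5\}$, giving at least six triangle-to-boundary edges. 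The planar embedding of $G_1$ in the disc further forces $N_{G_1}(u)\cap\{v_1,\ldots,v_5\}$, $N_{G_1}(v)\cap\{v_1,\ldots,v_5\}$, $N_{G_1}(w)\cap\{v_1,\ldots,v_5\}$ to be three consecutive arcs on the cyclic sequence $v_1v_2v_3v_4v_5$, pairwise meeting in at most one vertex; up to cyclic relabeling and permuting $\{u,v,w\}$, this leaves only a short list of configurations to examine.

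\emph{Reduction.} Mirroring Lemma~\ref{4cut-k2}, the plan is to build a smaller graph $G'$ by deleting $\{u,v,w\}$ and adding a small set $T$ of edges among the $v_i$'s, each realizable by a path in $G_1$ whose internal vertices lie in a designated subset of $\{u,v,w\}$. For example, $v_av_b$ with $v_a,v_b\in N_G(u)$ is realized by the path $v_auv_b$, while for $v_a\in N_G(u)$ and $v_b\in N_G(w)$ the edge $v_av_b$ can be realized by $v_auvwv_b$. I would choose $T$ so that the realizing paths have pairwise disjoint internal vertex sets within $\{u,v,w\}$. Then any $K_5$-subdivision $T'$ in $G':=G-\{u,v,w\}+T$ lifts to a $K_5$-subdivision in $G$ by replacing each edge of $T$ used in $T'$ with its realizing path, contradicting condition (1) on $G$. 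Hence $G'$ contains no $K_5$-subdivision, and by condition (3) the graph $G'$ admits a 4-coloring $\sigma$.

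\emph{Extension of $\sigma$.} The added edges in $T$ force $\sigma(v_a)\neq\sigma(v_b)$ on selected pairs; together with the arc structure this restricts how the colors of the $v_i$-neighbors of $u,v,w$ can collide. Following the pattern of Lemma~\ref{4cut-k2}, the extension proceeds by case analysis on which of the relevant $\sigma(v_i)$'s coincide: depending on the case I color $u,v,w$ in a well-chosen order, perhaps first setting one of them equal to $\sigma(v_i)$ for a carefully chosen neighbor $v_i$ in order to free a color for the remaining two. This yields three distinct colors for $u,v,w$ extending $\sigma$ to a 4-coloring of $G$, contradicting condition (2) on $G$.

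\emph{Main obstacle.} The principal difficulty is the case analysis together with the precise choice of $T$. Unlike Lemma~\ref{4cut-k2}, extending the coloring now requires three distinct colors for $u,v,w$ forming a system of distinct representatives over the available color lists, and configurations in which the three arcs each have size exactly two with minimal overlap leave the least slack. A secondary difficulty is verifying that the realizing paths for the edges of $T$ can simultaneously be chosen internally disjoint in $G_1$; this forces $T$ to respect the triangle's structure, for instance by associating the edges of $T$ with distinct triangle vertices or with Hamilton paths of the triangle. Carrying the case analysis through every configuration — verifying both the $K_5$-subdivision and the coloring-extension conditions — should complete the proof.
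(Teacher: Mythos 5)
Your outline follows the same general strategy as the paper (delete the triangle $\{u,v,w\}$, add a few edges among the $v_i$, lift any $K_5$-subdivision back into $G$, then extend a $4$-coloring), but what you have written is a plan, not a proof, and the parts you defer as the ``main obstacle'' are exactly where the substance lies. Concretely, the paper first pins down the adjacency pattern between the triangle and $V(G_1\cap G_2)$ by three preliminary facts, each proved by exhibiting a $4$-separation with a planar side on six vertices and invoking Lemma~\ref{4cut-k2}: every $v_i$ has a neighbor in $\{u,v,w\}$; no triangle vertex has four neighbors among the $v_i$; and any two triangle vertices together have at least four neighbors among the $v_i$. These are \emph{not} consequences of planarity and $4$-connectivity alone, contrary to your ``three consecutive arcs'' remark: for instance, a triangle vertex adjacent to four of the $v_i$ is perfectly planar and compatible with $4$-connectivity, and in that configuration the coloring extension can genuinely fail (that vertex may see four boundary colors plus its two triangle neighbors); the paper eliminates it structurally via Lemma~\ref{4cut-k2}, not by coloring. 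Without these reductions your ``short list of configurations'' is neither short nor uniformly amenable to the delete--add--color scheme, so the case analysis you postpone is not a routine verification.

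A second concrete problem is your requirement that the added edges of $T$ be realizable by paths with pairwise disjoint internal vertices inside $\{u,v,w\}$. In the paper's Case~1 the added edges are $v_5v_1,v_5v_2,v_5v_3$, and with the actual adjacencies ($v_5\notin N_G(v)$) three such internally disjoint paths do not exist; when all three added edges lie in the subdivision, the lift is not done edge-by-edge at all, but by replacing the branch vertex $v_5$ with $w$ using the paths $wv_5,wuv_1,wvv_2,wv_3$. Moreover, the choice of $T$ is forced by the coloring step as well: the added edges are all incident with the boundary vertex $v_5$ whose color is transferred to $v$ (a vertex \emph{not} adjacent to $v_5$), which is what lets $w$ and $u$ be colored greedily afterwards; your description of ``setting one of them equal to $\sigma(v_i)$ for a carefully chosen neighbor'' blurs this, since the color must come from a non-neighbor that the added edges separate in color from all of $v$'s actual neighbors. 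So the proposal has the right flavor but leaves a genuine gap: the structural lemmas that make the two-case analysis possible, the correct choice of $T$, the non-obvious subdivision lifting, and the verification of the extension are all missing.
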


\begin{proof}
     Suppose for a contradiction that $G_1-V(G_1\cap G_2) \cong K_3$. Let
     $u,v,w\in V(G_1)\setminus V(G_2)$ and $V(G_1\cap
     G_2)=\{v_i:i\in [ 5]\}$, and assume that $G_1$ is drawn in a closed disc in the plane with no edge crossings,
     such that $ v_1, v_2, v_3, v_4, v_5$ occur on the boundary of
the disc in clockwise order.

 	Note that $N_G(v_i)\cap \{u,v,w\}\ne \emptyset$ for $i\in [5]$. For, otherwise, we may assume by symmetry that
   $N_G(v_5)\cap \{u,v,w\}=\emptyset$. Since $G$ is 4-connected (by Lemma~\ref{4conn}), $N_G(v_i)\cap \{u,v,w\}\ne 
   \emptyset$ for $i\in [4]$. Since $(G_1,V(G_1\cap G_2))$ is planar, there exists $k\in [4]$ such that $|N_G(v_k)\cap 
   \{u,v,w\}|=1$. Hence, 
   $G$ has a 4-separation
    $(H_1,H_2)$ such that $V(H_1\cap H_2)=\{v_i: i\in [4]\setminus \{k\}\}\cup (N_G(v_k)\cap \{u,v,w\})$ and $H_1=G_1-\{v_k,v_5\}$.
   Now $|V(H_1)|=6$ and $(H_1, V(H_1\cap H_2))$ is planar (as $(G_1,V(G_1\cap G_2))$ is planar), contradicting
    Lemma~\ref{4cut-k2}.

   Moreover, no vertex in $\{u,v,w\}$ is adjacent to four vertices in
   $V(G_1\cap G_2)$. For, suppose $vv_i\in E(G)$ for $i\in [4]$. Then, by planarity and connectivity, $G$ has a separation
    $(H_1,H_2)$ such that $V(H_1\cap H_2)=\{v, v_1,v_4,v_5\}$,
    $(H_1, V(H_1\cap H_2))$ is planar, and
    $V(H_1)=\{v_1,v_4,v_5,u,v,w\}$. We have a contradiction to
    Lemma~\ref{4cut-k2}.

    Also note that any two vertices of $\{u,v,w\}$ must have at least four
    neighbors in $V(G_1\cap G_2)$. For, suppose $u,v$ has at most
    three neighbors in $V(G_1\cap G_2)$. Then $|(N_G(u)\cup N_G(v))\setminus \{u,v\}|=4$ (as $G$ is 4-connected), and $G$ has a separation
    $(H_1,H_2)$ such that $V(H_1\cap H_2)=(N_G(u)\cup N_G(v))\setminus \{u,v\}$, $(H_1, V(H_1\cap H_2))$ is planar, and $V(H_1)=
N_G(u)\cup N_G(v)$ (so
    $|V(H_1)|=6$). Again, we have a contradiction to Lemma~\ref{4cut-k2}.

\medskip

    {\it Case 1}. There exists $\{a,b\}\subseteq \{u,v,w\}$ such that $V(G_1\cap G_2)\subseteq N_G(\{a,b\})$.

     Without loss of generality, we may assume that $a=v$ and $b=w$, and that $v_1,v_2\in N_G(v)$ and $v_3,v_4,v_5\in N_G(w)$. We may further assume
     that the notation is chosen so that $uv_1,uv_5\in E(G)$ (by planarity). Moreover, $vv_3\in E(G)$ since  $u$ and $v$ together must have at least four
    neighbors in $V(G_1\cap G_2)$.

    Let  $G':=G-\{u,v,w\}+\{v_5v_1,v_5v_2,v_5v_3\}$. We claim that
    $G'$ contains no $K_5$-subdivision. For, suppose $T$ is a
    $K_5$-subdivision in $G'$.  If $v_5v_1,v_5v_2,v_5v_3\in E(T)$ then
    $T-\{v_5v_1,v_5v_2,v_5v_3\}$ and  the paths $wv_5,wuv_1,wvv_2,wv_3$ form
    a $K_5$-subdivision in $G$, a contradiction. So $\{v_5v_1,v_5v_2,v_5v_3\}\not\subseteq E(T)$. Then we obtain a contradiction by forming a  $K_5$-subdivision in $G$ from $T$: replacing edges in $\{v_5v_1,v_5v_2,v_5v_3\}\cap E(T)$ with one or two paths from $\{v_5uv_1,v_5wvv_2\}$, or
    $\{v_5uv_1,v_5wv_3\}$, or $\{v_5uvv_2,v_5wv_3\}$.

   Thus,  $G'$ has a 4-coloring, say $\sigma$. We have a contradiction by extending
   $\sigma$ to a 4-coloring of $G$: first assign
   $\sigma(v_5)$ to $v$, and then greedily color $w,u$ in order.

\medskip

   {\it Case} 2. For any $\{a,b\}\subseteq \{u,v,w\}$, $|N_G(\{a,b\})\cap V(G_1\cap G_2)|=4$.

 Without loss of generality, we may assume that $uv_1,uv_5,vv_2,vv_3\in
 E(G)$. By symmetry and planarity, assume $wv_3,wv_4\in
 E(G)$.  Note that $\{wv_5,vv_1\}\not\subseteq E(G)$ as, otherwise, $V(G_1\cap G_2)\subseteq N_G(\{v,w\})$ (but we
 are in Case 2). On the other hand,  since any two vertices of $\{u,v,w\}$ must have at least four
    neighbors in $V(G_1\cap G_2)$, $wv_5\in E(G)$ or $vv_1\in E(G)$.
So by symmetry,  we may assume $wv_5\in E(G)$
and $vv_1\notin E(G)$.

Let  $G':=G-\{u,v,w\}+\{v_5v_2,v_5v_3\}$. Note that $G'$ contain
no $K_5$-subdivision as $v_5v_2,v_5v_3$ can be replaced by $v_5uvv_2,v_5wv_3$, respectively.
Hence $G'$ has a  4-coloring, say $\sigma$. By assigning $\sigma(v_5)$ to $v$ and
greedily coloring $w,u$ in order, we obtain a 4-coloring of $G$, a contradiction.
 \end{proof}

\medskip

Now we characterize the situation when the planar side of a 5-separation in a Haj\'os graph has exactly eight vertices.

    \begin{lem}\label{5cut3vts}
      Let $G$ be a Haj\'os graph and let $(G_1,G_2)$ be a 5-separation in
     $G$ such that $|V(G_1)|= 8$, $(G_1,V(G_1\cap G_2))$ is planar, and
     $V(G_1\cap G_2)$ is an independent set in $G_1$. Then $(G_1,V(G_1\cap G_2))$
     is the 8-vertex graph in Figure~\ref{obstructions}, where the vertex in $V(G_1\cap G_2)$
     with three neighbors in $V(G_1)\setminus V(G_1\cap G_2)$ has degree at least 5 in $G$.
      \end{lem}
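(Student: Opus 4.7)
The plan is to introduce notation, apply 4-connectivity and Lemmas~\ref{4cut-k2} and~\ref{5cuttri} to prune the configuration space drastically, then carry out a planar case analysis to pin down $G_1$, and finally rule out $d_G(v_*)=4$ at the distinguished vertex $v_*$.

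Write $V(G_1\cap G_2)=\{v_1,\ldots,v_5\}$ in the clockwise order from the planar drawing, and $V(G_1)\setminus V(G_1\cap G_2)=\{u,v,w\}$. Since $V(G_1\cap G_2)$ is independent in $G_1$, every edge of $G_1$ lies inside $\{u,v,w\}$ or joins some $v_i$ to some member of $\{u,v,w\}$. I would first extract the following baseline constraints. By Lemma~\ref{5cuttri}, $G_1[\{u,v,w\}]\not\cong K_3$, so combined with $d_G(x)\ge 4$ from Lemma~\ref{4conn}, each of $u,v,w$ has at least two neighbors in $V(G_1\cap G_2)$. Every $v_i$ has at least one neighbor in $\{u,v,w\}$, for otherwise $\{v_j:j\ne i\}$ is a 4-cut whose planar side has seven vertices, and inside that side some $v_j$ has at most one interior neighbor (forced by planarity with only three interior vertices), and swapping $v_j$ into the cut produces a 4-separation with planar side of size six, contradicting Lemma~\ref{4cut-k2}. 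Finally, no pair $\{a,b\}\subseteq\{u,v,w\}$ has fewer than four neighbors in $V(G_1\cap G_2)$, else Lemma~\ref{4cut-k2} (or 4-connectivity) is violated.

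With these constraints in hand, I would case-split on the edge set of $G_1[\{u,v,w\}]$ (a proper subgraph of $K_3$). Each case, together with planarity in the disk and the contiguity of each interior vertex's neighborhood arc along $v_1v_2v_3v_4v_5$, leaves only a handful of adjacency patterns to check. For each pattern other than the 8-vertex graph of Figure~\ref{obstructions}, I would either exhibit a further 4-separation with six-vertex planar side (to contradict Lemma~\ref{4cut-k2}) or route a $K_5$-subdivision of a reduced auxiliary graph through $u,v,w$ to obtain a $K_5$-subdivision in $G$, in the same spirit as the proof of Lemma~\ref{5cuttri}. What survives forces exactly one $v_i$---say $v_3$---to be adjacent to all three of $u,v,w$, with the remaining incidences matching the Figure~\ref{obstructions} pattern.

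It remains to show $d_G(v_3)\ge 5$. Suppose instead $d_G(v_3)=4$, so $v_3$ has a single neighbor $z\in V(G_2)\setminus V(G_1\cap G_2)$. I would form $G':=G-\{u,v,w,v_3\}+F$ for a small planar edge set $F$ on $\{v_1,v_2,v_4,v_5,z\}$, chosen so that (i) any $K_5$-subdivision in $G'$ can be rerouted through the deleted vertices (via paths of the form $v_iuv_j$, $v_kvv_\ell$, $v_mwv_n$ together with the hub $v_3$) to yield a $K_5$-subdivision in $G$, and (ii) any 4-coloring of $G'$ extends to $G$ by first coloring $v_3$ to avoid $\sigma(z)$ and then greedily coloring $u,v,w$ in an order in which each has at most three pre-colored neighbors. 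Either outcome contradicts that $G$ is a Haj\'os graph. The hardest step, and the bulk of the argument, is the planar case analysis in the preceding paragraph: isolating the Figure~\ref{obstructions} pattern as the unique survivor demands careful bookkeeping of neighborhood arcs along the boundary and repeated applications of Lemma~\ref{4cut-k2}.
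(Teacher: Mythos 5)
There is a genuine gap, and it sits in the one step you describe in concrete detail. In the surviving configuration (the 8-vertex graph of Figure~\ref{obstructions}), the interior of $G_1$ is a path $uvw$ with, in the paper's labels, $N_{G_1}(u)=\{t_1,t_2,t_3,v\}$, $N_{G_1}(w)=\{t_1,t_4,t_5,v\}$, $N_{G_1}(v)=\{t_1,t_3,t_4,u,w\}$, and the distinguished boundary vertex $t_1$ adjacent to all of $u,v,w$. Your plan for ruling out $d_G(t_1)=4$ is to delete $\{u,v,w,t_1\}$, add a planar edge set $F$ on $\{t_2,t_3,t_4,t_5,z\}$, and then extend a 4-coloring by coloring $t_1$ first and ``greedily coloring $u,v,w$ in an order in which each has at most three pre-colored neighbors.'' No such order exists: $v$ has the five neighbors $t_1,t_3,t_4,u,w$, of which $t_1,t_3,t_4$ are always already colored, so $v$ must precede both $u$ and $w$; but then each of $u$ and $w$ faces four pre-colored neighbors ($t_1,t_2,t_3,v$, respectively $t_1,t_4,t_5,v$), and greedy can get stuck. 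Adding edges in $F$ cannot repair this, since edge additions only force colors to differ, whereas what is needed is a forced coincidence. The paper's device is exactly that: it deletes only $\{t_1,v\}$ and \emph{identifies} $u$ with $w$ into a single vertex $v^*$, so that any 4-coloring of the reduced graph gives $u$ and $w$ the same color, after which $v$ (seeing $u,w$ in one color plus $t_3,t_4$) and then $t_1$ (seeing $u,w$ in one color plus $v,z$) can be colored greedily; the $K_5$-freeness of the identified graph is checked by splitting $v^*$ back and routing through $v,t_2,t_5$. A more careful non-greedy analysis of the four-vertex extension can be made to work without identification, but you have not supplied it, and the mechanism you state is false as written.

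Separately, the central structural part of the lemma --- showing that every configuration other than the Figure~\ref{obstructions} pattern is impossible --- is only promised (``I would either exhibit a further 4-separation \dots or route a $K_5$-subdivision \dots''), not carried out, and this is where most of the content lies. For comparison, the paper gets there quickly: 4-connectivity plus planarity force $D:=G_1-V(G_1\cap G_2)$ to be connected, so by Lemma~\ref{5cuttri} it is the path $uvw$; then $|N_G(u)\cap N_G(w)|\le 2$ (else a 4-separation with 6-vertex planar side contradicts Lemma~\ref{4cut-k2}), which pins $N_G(u)=\{t_1,t_2,t_3,v\}$ and $N_G(w)=\{t_1,t_4,t_5,v\}$; then $vt_3,vt_4\in E(G)$ again via Lemma~\ref{4cut-k2}; and $vt_1\in E(G)$ via the reduction $G-\{u,v,w\}+\{t_1t_3,t_1t_4\}$, which needs \emph{both} the $K_5$-rerouting and the coloring-extension step (assign $\sigma(t_1)$ to $v$, then color $u,w$). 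Your sketch omits that these eliminations hinge on coloring extensions as much as on subdivision rerouting, and also never uses the connectivity of $D$, so your case split over all proper subgraphs of $K_3$ on $\{u,v,w\}$ is larger than necessary. As it stands the proposal is an outline whose only fully specified step does not work.
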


\begin{proof}
        Let $V(G_1\cap G_2)=\{t_i: i\in [5]\}$ and we may assume
        that $G_1$ is drawn in a closed disc in the plane such that $t_1,t_2,t_3,t_4, t_5$ occur on the boundary of the disc in
        clockwise order. Let $D:=G_1-V(G_1\cap G_2)$.
       Since $G$ is 4-connected (by Lemma~\ref{4conn}) and $(G_1,V(G_1\cap G_2))$ is planar,
       $D$ must be connected. Thus, by Lemma~\ref{5cuttri}, $D$ is a
       path of length two, and we write $D=uvw$.

       Suppose that $|N_G(u)\cap N_G(w)|\ge 3$. Then, since $G$ is
       4-connected and $(G_1,V(G_1\cap G_2))$ is planar,
       we may assume that $t_1,t_4\in N_G(u)\cap N_G(w)$  and
       $N_G(w)=\{t_1,t_4,t_5,v\}$. Hence, by planarity, $G$ has a
       separation $(H_1,H_2)$ such that $V(H_1\cap
       H_2)=\{t_1,t_4,t_5,u\}$, $(H_1,V(H_1\cap H_2))$ is planar, and $H_1=G_1-\{t_2,t_3\}$,
       contradicting Lemma~\ref{4cut-k2}.

      Thus, since $G$ is 4-connected and $(G_1,V(G_1\cap G_2))$ is
      planar, we may assume that $N_G(u)=\{t_1,t_2,t_3,v\}$ and
      $N_G(w)=\{t_4,t_5,t_1,v\}$.

     Then $vt_3,vt_4\in
      E(G)$. For, otherwise, we may assume by symmetry that
      $vt_4\notin E(G)$. Now $G$ has a separation $(H_1,H_2)$ such
      that $V(H_1\cap H_2)=\{t_1,t_2,t_3,w\}$, $G_2\subseteq H_2$, $(H_1, V(H_1\cap H_2))$
      is planar, and $H_1=G_1-\{t_4,t_5\}$, contradicting Lemma~\ref{4cut-k2}.

      If $vt_1\notin E(G)$ then let $G'=G-\{u,v,w\}+\{t_1t_3, t_1t_4\}$.
      Note that $G'$ has no $K_5$-subdivision; for,  if $T$ is a
      $K_5$-subdivision in $G'$ then $(T-\{t_1t_3,t_1t_4\})\cup
      t_1ut_3\cup t_1wt_4$ (and hence $G$) also contains a
      $K_5$-subdivision. Thus, since $G$ is a Haj\'{o}s graph, $G'$ is
      4-colorable; and let $\sigma$ be a 4-coloring of $G'$.
       By assigning $\sigma(t_1)$ to $v$ and greedily
      coloring $u,w$ in order, we obtain a 4-coloring of $G$, a
      contradiction.

      So by planarity of $G_1$, $N_G(v)=\{t_1,t_3,t_4,u,w\}$. If
      $d_G(t_1)\ge 5$ then $(G_1,V(G_1\cap G_2))$ is the 8-vertex graph in Figure~\ref{obstructions}, as $V(G_1\cap G_2)$ is independent in $G_1$.
      So assume that
      $d_G(t_1)=4$, and let $G'$ be obtained from $G-\{t_1,v\}$ by
      identifying $u$ and $w$ as $v^*$. Then $G'$ has no
      $K_5$-subdivision; since if
      $T$ is a $K_5$-subdivision in $G'$ then $((T-v^*)+\{v, vt_3,vt_4\})\cup vut_2 \cup vwt_5$ (and hence $G$) also contains  a $K_5$-subdivision. So
      let $\sigma$ be a 4-coloring of $G'$. Then by assigning
      $\sigma(v^*)$ to both $u$ and $w$ and greedily coloring
      $v,t_1$ in order, we extend $\sigma$ to a 4-coloring of
      $G$, a contradiction.
\end{proof}


\section{4-Separations}

In this section we prove that if the planar side of a 4-separation in a Haj\'os graph has at least 6 vertices then it contains a good wheel.

\begin{lem}\label{4cut}
    Let $G$ be a Haj\'os graph and $(G_1,G_2)$ be a 4-separation in
    $G$ such that $|V(G_1)|\ge 6$ and $(G_1,V(G_1\cap G_2))$ is planar.
      Then $G_1$ contains a $V(G_1\cap G_2)$-good wheel.
\end{lem}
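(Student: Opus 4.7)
The plan is to argue by induction on $|V(G_1)|$, with base case $|V(G_1)| = 7$ forced by Lemma~\ref{4cut-k2}. Fix a planar drawing of $G_1$ in a closed disc with $V(G_1\cap G_2) = \{v_1, v_2, v_3, v_4\}$ on the boundary in clockwise order, and set $H := G_1 - V(G_1\cap G_2)$. Since $G$ is 4-connected by Lemma~\ref{4conn}, every edge incident to a vertex of $H$ lies in $E(G_1)$, so each vertex of $H$ has degree at least $4$ in $G_1$. If $H$ is disconnected, a component $H_0$ together with $\{v_1, v_2, v_3, v_4\}$ yields a $4$-separation of $G$ with a strictly smaller planar side; using Lemma~\ref{4cut-k2} to rule out $|V(H_0)| = 2$ and planarity (no $K_{3,3}$) to rule out three singleton components, one reduces to a component of size at least $3$, to which the induction hypothesis applies.

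The driving observation is that it suffices to find a vertex $w \in V(H)$ and a cycle $C \subseteq G_1 - w$ such that $w$ has at least three neighbors on $C$ and every $v_i \in V(C)$ lies in $N_G(w)$. Two especially clean scenarios work: (i) some $w \in V(H)$ has all its incident faces in $G_1$ triangular, in which case the cyclic list of neighbors of $w$ is itself such a cycle with $V(C) \subseteq N_G(w)$, so goodness is automatic; or (ii) $H$ contains a cycle $C'$ and a vertex $w \in V(H)\setminus V(C')$ with at least three neighbors on $C'$, in which case $V(C') \subseteq V(H)$ makes goodness vacuous.

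For the inductive step with $|V(G_1)| \ge 8$ and $H$ connected, I would first look for (i) or (ii). Failing that, I would extract a further $4$-separation $(A, B)$ of $G$ with $V(A\cap B) \subseteq V(G_1)$, $(A, V(A\cap B))$ planar, and $7 \le |V(A)| < |V(G_1)|$ (the lower bound using Lemma~\ref{4cut-k2} again), by exploiting a non-triangular interior face or an interior vertex of degree exactly $4$. The induction hypothesis applied to $(A, V(A\cap B))$ yields a $V(A\cap B)$-good wheel in $A$, which one then verifies is a $V(G_1\cap G_2)$-good wheel in the original graph. For the base case $|V(G_1)| = 7$, $V(H)$ is a connected $3$-vertex graph, so $G_1[V(H)] \cong K_3$ or $P_3$; combined with the degree condition and planarity, a short case analysis on the adjacencies between $V(H)$ and $\{v_1, v_2, v_3, v_4\}$ produces an explicit wheel in each subcase.

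The main obstacle is the reduction step in the induction: finding a further $4$-separation whose smaller side has at least $7$ vertices and whose extracted good wheel transfers back to a $V(G_1\cap G_2)$-good wheel in $G_1$. This requires careful control of the interaction between the new cut, the planar embedding, and $\{v_1, v_2, v_3, v_4\}$, and is likely to involve several subcases in the spirit of the local reductions used in Lemmas~\ref{5cuttri} and~\ref{5cut3vts}.
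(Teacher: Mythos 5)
Your plan has a fatal structural flaw: you try to prove the lemma by a purely local induction, using only 4-connectivity of $G$, planarity of $(G_1,V(G_1\cap G_2))$, and Lemma~\ref{4cut-k2}, and you expect every sufficiently large configuration to actually contain a good wheel (``a short case analysis \dots produces an explicit wheel in each subcase''). That is false. Consider the configuration in which $D:=G_1-V(G_1\cap G_2)$ is a $4$-cycle $u_1u_2u_3u_4$ with $u_i$ adjacent to $t_i$ and $t_{i+1}$ for each $i$ (indices mod $4$), where $V(G_1\cap G_2)=\{t_1,t_2,t_3,t_4\}$: this is planar with the $t_i$ on the boundary, every vertex of $D$ has degree $4$, every $t_i$ has two neighbors in $D$, so nothing in your list of hypotheses rules it out --- yet $G_1$ contains no wheel at all (for any candidate center $u_i$, the only cycles in $G_1-u_i$ are two triangles, each meeting $N(u_i)$ in at most two vertices), let alone a $V(G_1\cap G_2)$-good one. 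The same happens for $D$ equal to this $4$-cycle plus one chord, and analogous $7$-vertex configurations defeat your base case as well. So no amount of care in your inductive reduction step can succeed: the conclusion of Lemma~\ref{4cut} is simply not a consequence of the local structural hypotheses you allow yourself.

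What the paper does, and what your proposal is missing, is the use of the two defining properties of a Haj\'os graph beyond $4$-connectivity. The paper takes $(G_1,G_2)$ with $G_1$ minimal, and by structural arguments (each $t_i$ has at least two neighbors in $D$, $D$ is $2$-connected, any vertex of $D$ off the outer cycle $C$ or any extra attachment on $C$ yields a good wheel or a smaller separation or a contradiction to Lemmas~\ref{4cut-k2} and~\ref{5cuttri}) it reduces precisely to the residual cases $D=C=u_1u_2u_3u_4$ and $D=C+u_2u_4$ described above. These cases are then eliminated not by exhibiting a wheel but by showing they cannot occur in a Haj\'os graph: one deletes $\{u_1,u_2,u_3,u_4\}$ (adding a triangle on $t_1,t_2,t_3$ in the chord case), checks that the resulting graph has no $K_5$-subdivision by rerouting through the $u_i$, takes a $4$-coloring of it, and extends that coloring back to $G$, contradicting that $G$ is not $4$-colorable. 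Any correct proof must contain an argument of this global coloring/$K_5$-subdivision type for the wheel-free configurations; your proposal has no mechanism for it, and in addition your intermediate reduction step (extracting a smaller $4$-separation with $|V(A)|\ge 7$ from a non-triangular face or a degree-$4$ interior vertex) is left entirely unspecified at the point where these wheel-free configurations would arise.
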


\begin{proof}
   We may choose $(G_1,G_2)$ so that $G_1$ is minimal, since,  for any $4$-separation $(G_1',G_2')$
in $G$ with $G_1'\subseteq G_1$, a $V(G_1'\cap G_2')$-good wheel in $G_1'$ is also a $V(G_1\cap G_2)$-good wheel in $G_1$.

By Lemma~\ref{4cut-k2}, $|V(G_1)|\ge 7$.
    Let $V(G_1\cap G_2)=\{t_1, t_2,t_3,t_4\}$ and assume that $G_1$ is drawn in a closed disc in the plane with no edge crossings and with $t_1, t_2, t_3, t_4$ on the boundary of the
disc in clockwise order.    Let $D:=G_1-V(G_1\cap G_2)$. Since $G$ is 4-connected (by Lemma~\ref{4conn}), $|N_G(t_i)\cap V(D)|\ge 1$ for each $i\in [4]$.
    In fact,

   \begin{itemize}
   \item [(1)]  $|N_G(t_i)\cap V(D)|\ge 2$ for $i\in [4]$.
   \end{itemize}
   For, suppose  $|N_G(t_i)\cap V(D)|= 1$ for some $i\in [4]$, and let $t\in N_G(t_i)\cap V(D)$. Then $(G_1-t_i,G_2+\{t,tt_i\})$ is a separation
   in $G$ that contradicts the minimality of $G_1$. $\Box$

   \begin{itemize}
   \item [(2)] $D$ is 2-connected.
   \end{itemize}
   Suppose to the contrary that $D$ is not 2-connected. Then $D$ has a separation $(D_1, D_2)$ such that $|V(D_1\cap D_2)|\le 1$ and  $|V(D_i)\setminus V(D_{3-i})|\ge 1$ for $i=1,2$.
   Since $G$ is 4-connected,
   $|N_G(D_i-D_{3-i})\cap \{t_1,t_2,t_3,t_4\}|\ge 3$ for $i=1,2$.

   Thus by planarity (and choosing appropriate notation for $t_i$), we may assume that $t_1,t_2,t_3\in N_G(D_1-D_2)$ and $t_3,t_4,t_1\in
   N_G(D_2-D_1)$.  Note that $G$ has a separation $(G_1',G_2')$ such that $V(G_1'\cap G_2')=\{t_1,t_2,t_3\}\cup
   V(D_1\cap D_2)$ and $D_1\subseteq G_1'\subseteq G_1$, as well as a separation $(G_1'',G_2'')$ such that $V(G_1''\cap G_2'')=\{t_1,t_3,t_4\}\cup
   V(D_1\cap D_2)$ and $D_2\subseteq G_1''\subseteq G_1$. Since $G$ is 4-connected, $|V(D_1\cap D_2)|=1$.
   Thus by the choice of $(G_1,G_2)$ (the minimality of $G_1$),
  $|V(D_i)|=2$ for $i=1,2$. But then $|N_G(t_2)\cap V(D)|=1$, contradicting (1). $\Box$

   \medskip
   Let $C$ be the outer cycle of $D$. If there exists $x\in
   V(D)\setminus V(C)$ then all vertices and edges  of $D$ cofacial with $x$ (including $x$) form the desired wheel.
  Thus we may assume that

   \begin{itemize}
   \item [(3)] $V(D)=V(C)$.
   \end{itemize}

We claim that

   \begin{itemize}
   \item [(4)] for each $i\in [4]$, there exists $u_i\in N_G(t_i)\cap N_G(t_{i+1})\cap V(C)$ (with $t_1:=t_5$).
   \end{itemize}
   For, suppose (4) fails and, without loss of generality, assume that $N_G(t_4)\cap N_G(t_1)\cap V(C)=\emptyset$. Let $v_1\in N_G(t_1)\cap V(C)$ and
   $v_4\in N_G(t_4)\cap V(C)$ such that $v_4Cv_1$ is minimal. Thus, $v_1t_4,v_4t_1\notin E(G)$.
   By (1) and by planarity, $v_1t_2,v_1t_3,v_4t_2,v_4t_3\notin E(G)$.
Since the degree of $v_1$ in $G$ is at least 4, $v_1$ has a neighbor $v$ in $D$ such that $vv_1\in E(D)\setminus E(C)$. We choose $v$, such that $vCv_1$ is minimal. Moreover, let $v'\in V(v_4Cv_1-v_1)$ such that $v'v_1\in E(C)$.

 Suppose $N_G(t_3)\cap V(v_1Cv-v)=\emptyset$. Then $G$ has a 4-separation $(H_1,H_2)$ such that $V(H_1\cap H_2)=\{t_1,t_2, v,v'\}$,
 $v_1\in V(H_1)\setminus V(H_2)$, $G_2+\{t_3,t_4\}\subseteq H_2$,  $(H_1,V(H_1\cap H_2))$ is planar,
 and $|V(H_1)|\ge 6$ (by (1)). Now $(H_1,H_2)$ contradicts the choice of $(G_1,G_2)$ (the minimality of $G_1$).

  Hence $N_G(t_3)\cap V(v_1Cv-v)\neq\emptyset$.
 Moreover, since $G$ is 4-connected, $\{v_1,v,t_4\}$ cannot be a cut in $G$.
 Hence, $N_G(t_3)\not\subseteq V(v_1Cv)$.
     Thus, $G$ has a 4-separation $(H_1,H_2)$ such that $V(H_1\cap H_2)=\{t_3,t_4,v,v_1\}$, $v_4\in V(H_1)\setminus V(H_2)$ (so $|V(H_1)|\ge 6$ by (1)),
   $G_2+\{t_1,t_2\}\subseteq H_2$, and $(H_1,V(H_1\cap H_2))$ is planar. Now $(H_1, H_2)$ contradicts  the choice of $(G_1,G_2)$. $\Box$

   \medskip
   We may assume that
  \begin{itemize}
   \item [(5)]  $V(C)=\{u_1,u_2,u_3,u_4\}$.
  \end{itemize}
    First, we may assume that  $N_G(t_i)\cap (V(C)\setminus \{u_j: j\in [4]\})=\emptyset$
     for $i\in [4]$.  For, suppose, without loss of generality, that there exists $u\in
   V(u_4Cu_1)\setminus \{u_4,u_1\}$ with $ut_1\in E(G)$. Then the vertices and edges of $G_1$
   cofacial with $u$ (including $u$) form a $V(G_1\cap
   G_2)$-good wheel.

    Now suppose $V(C)\ne \{u_1,u_2,u_3,u_4\}$.  Then $|V(C)|=5$. For, otherwise, $G$ has a 4-separation $(H_1,H_2)$
 such that $V(H_1\cap H_2)=\{u_1,u_2,u_3,u_4\}$, $H_1=D$, $(H_1,V(H_1\cap H_2))$ is planar,
          and $G_2+\{t_i : i\in [4]\}\subseteq H_2$. Clearly, $(H_1,H_2)$ contradicts the choice of $(G_1,G_2)$.

     So let $u\in V(C)\setminus  \{u_1,u_2,u_3,u_4\}$ and, without loss of generality,
     assume that $u\in V(u_4Cu_1)$.  Since $ut_1\notin E(G)$, $uu_2,uu_3\in E(G)$ as $G$ is 4-connected.
     Hence, $G$ has a 5-separation $(H_1,H_2)$ such that $V(H_1\cap H_2)=\{t_2,t_3,t_4, u_1,u_4\}$
     and $H_1-V(H_1\cap H_2)$ is the triangle $uu_2u_3u$, contradicting Lemma~\ref{5cuttri}.  $\Box$

    \begin{itemize}
   \item [(6)] $D\ne C$.
    \end{itemize}
        For, suppose $D=C$. Let $\sigma$ be a 4-coloring of $G-\{u_1,u_2,u_3,u_4\}$ which exists as $G$ is a Haj\'{o}s graph.
      We can extend $\sigma$ to a 4-coloring of $G$ as follows:
      If $|\{\sigma(t_i): i\in [4]\}|=4$ then assign  to $u_1,u_2,u_3,u_4$ the colors
      $\sigma(t_4),\sigma(t_1), \sigma(t_2), \sigma(t_3)$,
      respectively. If $|\{\sigma(t_i): i\in [4]\}|\le 3$ then assign
      to both $u_1$ and $u_3$ a color not in  $\{\sigma(t_i): i\in [4]\}$, and
      greedily color $u_2,u_4$ in order. This contradicts the assumption that $G$ is a Haj\'{o}s graph. $\Box$

\medskip

    By (6) and by planarity of $D$, we may assume $D=C+u_2u_4$. Note that $G':=(G-\{u_1,u_2,u_3,u_4\})+\{t_1t_2,t_2t_3,t_3t_1\}$ contains no $K_5$-subdivision;
      for, if $T$ is a $K_5$-subdivision in $G'$ then, by replacing $t_1t_2,t_2t_3,t_3t_1$ (whenever in $T$)
      with $t_1u_1t_2, t_2u_2t_3, t_3u_3u_4t_1$, respectively, we obtain a
      $K_5$-subdivision in $G$.

    Thus let $\sigma$ be a 4-coloring of  $G'$.   If  $|\{\sigma(t_i): i\in [4]\}|=4$ then  assign  to
      $u_1,u_2,u_3,u_4$ the colors
      $\sigma(t_4),\sigma(t_1), \sigma(t_2), \sigma(t_3)$,
      respectively, we obtain a 4-coloring of $G$, a
      contradiction.

        So assume $|\{\sigma(t_i): i\in [4]\}|=3$ and
      $\sigma(t_4)\in \{\sigma(t_i): i\in [3]\}$. We derive a
      contradiction by extending $\sigma$ to a  4-coloring of $G$:
      If $\sigma(t_4)=\sigma(t_2)$ or $\sigma(t_4)=\sigma(t_1)$ then assign $\sigma(t_1), \sigma(t_3)$ to $u_2,u_4$, respectively, and greedily color $u_1,u_3$ in order; and
      if $\sigma(t_4)=\sigma(t_3)$ then assign $\sigma(t_1), \sigma(t_2)$ to $u_2,u_4$, respectively, and greedily color $u_1,u_3$ in order.
\end{proof}


\section{5-Separations}

In this section, we characterize the 5-separations $(G_1,G_2)$ in a Haj\'{o}s graph such that  
$(G_1,V(G_1\cap G_2))$ planar and $G_1$ has no
$V(G_1\cap G_2)$-good wheel. First, we deal with the case when every vertex in $V(G_1\cap G_2)$ has at least two 
neighbors in $G_1-G_2$. 

\begin{lem}\label{5cutdeg2}
    Let $G$ be a Haj\'os graph and $(G_1,G_2)$ be a 5-separation
    in $G$ such that $(G_1,V(G_1\cap G_2))$ is
    planar and every vertex in $V(G_1\cap G_2)$ has at least two
    neighbors in $G_1-V(G_1\cap G_2)$. Then $G_1$ contains a $V(G_1\cap G_2)$-good wheel.
\end{lem}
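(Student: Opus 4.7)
The plan is to follow the strategy of Lemma~\ref{4cut}, adapted to the 5-separation setting. First, choose $(G_1, G_2)$ so that $G_1$ is minimal among 5-separations satisfying the hypotheses: if $(G_1', G_2')$ is a smaller such 5-separation with the planar property retained and each $t_i$ still has at least two neighbors in $G_1' - V(G_1' \cap G_2')$, then a good wheel in $G_1'$ is also a good wheel in $G_1$. Let $V(G_1 \cap G_2) = \{t_1, \ldots, t_5\}$ in clockwise order on the boundary of the disc, and let $D := G_1 - V(G_1 \cap G_2)$.

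Next, I would show that $D$ is 2-connected. A cut vertex or 1-separation of $D$, combined with the hypothesis that each $t_i$ has at least two neighbors in $D$, planarity, and Lemma~\ref{4conn} (4-connectivity of $G$), would yield either a smaller 5-separation violating minimality, or a small planar separation forbidden by Lemma~\ref{4cut-k2}, Lemma~\ref{5cuttri}, or Lemma~\ref{5cut3vts}. Let $C$ be the outer cycle of $D$. If there exists $x \in V(D) \setminus V(C)$, then $x$ is nonadjacent to every $t_i$ by planarity, so the cofacial cycle around $x$ together with $x$ and its edges forms a wheel disjoint from $\{t_1, \ldots, t_5\}$, hence trivially $V(G_1 \cap G_2)$-good. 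So assume $V(D) = V(C)$.

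I would then establish, for each $i \in [5]$, the existence of $u_i \in N_G(t_i) \cap N_G(t_{i+1}) \cap V(C)$ (indices mod 5). The argument follows step (4) of Lemma~\ref{4cut}: the absence of such $u_i$ yields either a smaller 5-separation (contradicting minimality) or a 4-separation with small planar side forbidden by Lemma~\ref{4cut-k2}. If some $t_i$ has a neighbor $u \in V(C) \setminus \{u_1, \ldots, u_5\}$, then the vertices and edges of $G_1$ cofacial with $u$ form a $V(G_1 \cap G_2)$-good wheel centered at $u$, as in step (5) of Lemma~\ref{4cut}. Otherwise $N_G(t_i) \cap V(C) = \{u_{i-1}, u_i\}$ for every $i$; a parallel argument then forces $V(C) = \{u_1, \ldots, u_5\}$, since any extra vertex on $C$ would yield either a triangular side of a 5-separation (contradicting Lemma~\ref{5cuttri}) or a smaller 5-separation contradicting minimality.

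It remains to dispose of the case $V(C) = \{u_1, \ldots, u_5\}$, where $D$ is $C$ together with some subset of chords. Mirroring the final steps of Lemma~\ref{4cut}, I expect each such configuration to lead to a contradiction with $G$ being a Haj\'os graph: form $G'$ from $G - V(C)$ by adding a carefully chosen set of edges among the $t_i$'s, so that any $K_5$-subdivision in $G'$ lifts to one in $G$ by replacing each added edge $t_it_j$ by a 2-path through some $u_k$; then a 4-coloring of $G'$ extends to a 4-coloring of $G$. The main obstacle is this last step: the 5-case admits more chord configurations than the 4-case, and for each configuration the choice of added edges and the coloring extension must be coordinated. In particular, when $D = C$ the $u_i$'s form a 5-cycle whose list-coloring requirements (each $u_i$ must avoid $\sigma(t_i)$ and $\sigma(t_{i+1})$) are tight, so the added-edge set must be chosen to force enough coincidences among the $\sigma(t_i)$'s, likely by adding a short path or a small chorded cycle on $\{t_1, \ldots, t_5\}$ rather than the whole pentagon, and the verification that $G'$ has no $K_5$-subdivision will be the most delicate part of the argument.
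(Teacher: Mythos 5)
Your outline diverges from what can actually be proved at two critical points, and both are genuine gaps rather than routine details. First, the minimality you invoke is not usable: the hypothesis that every vertex of $V(G_1\cap G_2)$ has at least two neighbors in $D$ is not inherited by the smaller separations that arise in a step-(4)-style argument, so ``contradicting minimality'' is not available; the correct substitute (which the paper uses) is to apply Lemma~\ref{4cut} to 4-separations $(G_1',G_1'')$ inside $G_1$ with $|V(G_1')|\ge 6$. More importantly, your claim that every consecutive pair $t_i,t_{i+1}$ has a common neighbor $u_i$ on $C$ is too strong. What one can prove (the paper's claim (3)) is that each $t_i$ has exactly two neighbors $v_i,w_i$ on $C$ and that $w_i\ne v_{i+1}$ for at most one index $i$; the exceptional configuration cannot be eliminated and accounts for a substantial part of the proof (Case 1, with two subcases), requiring new $K_5$-subdivision constructions --- e.g.\ adding an apex vertex joined to $t_1,t_2,t_4,t_5$ together with the edge $t_1t_5$ --- and tailored coloring extensions. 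Your proposal simply asserts the clean outcome and skips this case entirely.

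Second, even in the ``all pairs share a neighbor'' case, you cannot force $V(C)=\{u_1,\dots,u_5\}$; this is where the analogy with step (5) of Lemma~\ref{4cut} breaks down. In the 5-cut setting each arc $v_iCv_{i+1}$ may contain an interior vertex (the paper only bounds each arc to at most one interior vertex, using Lemmas~\ref{4cut} and~\ref{5cuttri}), and configurations such as all five arcs being nontrivial genuinely occur and are disposed of by coloring, not by structural exclusion. The paper then needs a further idea you do not anticipate: obtain a 4-coloring of $G_2$ in which $\{t_1,\dots,t_5\}$ receives at most three colors, by adding a new apex vertex adjacent to all five $t_i$ and showing the resulting graph has no $K_5$-subdivision; only with this extra degree of freedom do the six subcases (indexed by the number of edges $v_iv_{i+1}$ present) close, with Lemmas~\ref{5cuttri} and~\ref{5cut3vts} handling the triangle-like obstructions. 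Since you explicitly defer exactly this part (``the most delicate part of the argument''), the proposal as written does not constitute a proof: the two hard halves of the lemma --- the one exceptional non-shared-neighbor configuration and the endgame coloring/subdivision analysis on $C$ with interior arc vertices --- are missing.
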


\begin{proof}
     Let $V(G_1\cap G_2)=\{t_i : i\in [5]\}$ and $D=G_1-V(G_1\cap G_2)$,   and
     assume that  $G_1$ is drawn in a closed disc in the plane
     such that $t_1,t_2,t_3,t_4,t_5$ occur on the boundary of
     the disc in clockwise order.  Therefore,  by planarity and the assumption that
     $|N_G(t_i)\cap V(D)|\ge 2$ for $i\in [5]$, we have $|V(D)|\ge 3$.  We claim that

    \begin{itemize}
     \item [(1)] $D$ is 2-connected.
    \end{itemize}
    For, suppose $D$ is not 2-connected.  Then, since $|V(D)|\ge 3$,  $D$ has a separation $(D_1,D_2)$
    such that $|V(D_1\cap D_2)|\le 1$. Since $G$ is 4-connected and $(G_1,V(G_1\cap G_2)$ is planar,  we may assume
    without loss of generality that $t_4,t_5\notin N_G(D_1-D_2)$. Hence, $G$ has a separation $(H_1,H_2)$ such that
    $V(H_1\cap H_2)=\{t_1,t_2,t_3\}\cup V(D_1\cap D_2)$, $D_1\subseteq H_1$, and $G_2\cup D_2\subseteq H_2$.
     Note $|V(D_1\cap D_2)|=1$ as $G$ is 4-connected.
       If $|V(D_1)|=2$ then $|N_G(t_2)\cap V(D)|=1$, a contradiction. So  $|V(D_1)|\ge 3$ and, hence, $|V(H_1)|\ge 6$.  By Lemma~\ref{4cut-k2}, 
     $H_1$ has a $V(H_1\cap H_2)$-good wheel, which is also a $V(G_1\cap G_2)$-good wheel in $G_1$, a contradiction. $\Box$

  \medskip

   By (1), let $C$ be the outer cycle of $D$ and, for $i\in [5]$, let
   $v_i,w_i\in N_{G_1}(t_i)\cap V(C)$ such that $w_iCv_i$ contains $N_{G_1}(t_j)\cap V(C)$
   for $j\in [5]\setminus \{i\}$ and, subject to this, $w_iCv_i$ is minimal.
   Then $v_1,w_1,v_2,w_2, v_3,w_3,v_4,w_4,v_5,w_5$ occur on $C$ in
   clockwise order.  We may assume

    \begin{itemize}
     \item [(2)] $V(D)= V(C)$ and $N_{G_1}(t_i)\cap V(C)=\{v_i,w_i\}$ for
       $i\in [5]$.
    \end{itemize}
    For, suppose there exists $v\in V(D)\setminus
    V(C)$ or $v\in N_{G_1}(t_i)\cap V(v_iCw_i-\{v_i,w_i\})$ for some $i\in [5]$. Then, since $G$ is 4-connected, all vertices and edges in $G_1$
    cofacial with $v$ (including $v$) form a $V(G_1\cap
    G_2)$-good wheel.  $\Box$
    
\medskip

  We may also assume that

  \begin{itemize}
     \item [(3)] $|\{i\in [5]: w_i\ne v_{i+1}\}|\le 1$, and if $w_i\ne v_{i+1}$ then $w_i$ and $v_{i+1}$
       each have a neighbor in $v_{i+3}Cw_{i+3}-\{v_{i+3},w_{i+3}\}$, where $v_k=v_{k-5}$ and $w_k=w_{k-5}$
       for $k>5$.
  \end{itemize}
       Note that if the second half of (3) holds then the first half of (3) follows from the planarity of $G_1$.
       Thus, we only consider the second half of
      (3) with $w_5\ne v_1$, without lost of generality.

         Then $w_5t_1,v_1t_5\notin E(G)$. By planarity of $G_1$,
       neither $v_1$ nor $w_5$ is adjacent to any of $\{t_2,t_3,t_4\}$.
       Hence, since $G$ is 4-connected,
       there exist $w_5w_5',v_1v_1'\in E(G)\setminus E(C)$, where
       $v_1',w_5'\in V(C)$ (by (2)). By planarity of $G_1$ again, $v_1,v_1',w_5',w_5$ occur on
       $C$ in this clockwise order. Choose these edges so that
       $v_1'Cw_5'$ is minimal.

     It suffices to show   that $v_1'Cw_5'\subseteq v_3Cw_3-\{v_3,w_3\}$.
     For, suppose $v_1'Cw_5'\not \subseteq v_3Cw_3-\{v_3,w_3\}$.
      Then we may assume by symmetry that $w_5'\in V(w_3Cw_5-w_5)$. Since $G$
      is 4-connected, $\{t_5,w_5,w_5'\}$ cannot be a cut in $G$; so
      $w_5'\in V(w_3Cw_4-w_4)$. Let $w$ be the neighbor of $w_5$ with $w_5w\in 
      E(w_5Cv_1)$. Then $G_1$ has a 4-separation $(G_1',G_1'')$ such that
      $V(G_1'\cap G_1'')=\{t_4,t_5,w,w_5'\}$, $V(G_1\cap G_2)\subseteq
      V(G_1'')$, and $|V(G_1')|\ge 6$. By Lemma~\ref{4cut}, $G_1'$ contains a
      $V(G_1'\cap G_1'')$-good wheel, which is also a $V(G_1\cap G_2)$-good wheel in $G_1$.       $\Box$

\medskip

      Thus, by (3), we have two cases.

    \medskip
     {\it Case} 1.  $|\{i\in [5]: w_i\ne v_{i+1}\}|=1$.

      Without loss of generality, assume that $w_5\ne v_1$, and, by
      (3), let $v_1',w_5'\in V(v_3Cw_3)\setminus \{v_3,w_3\}$ such that $w_5w_5',v_1v_1'\in
      E(G)$  and, subject to this,
             $v_1'Cw_5'$ is minimal.

     We may further assume that $|V(w_5'Cw_5)|=4$. To see this, we first
     note that, by (2), $G_1$ has a 4-separation $(G_1',G_1'')$ such that $V(G_1'\cap
     G_1'')=\{w_5,w_5',v_4,w_4\}$ and $V(G_1\cap G_2)\subseteq V(G_1'')$. If $|V(G_1')|\ge 6$ then,
     by Lemma~\ref{4cut}, $G_1'$ contains a
      $V(G_1'\cap G_1'')$-good wheel, which is also a $V(G_1\cap G_2)$-good wheel in $G_1$. Thus, we may
     assume $|V(G_1')|\le 5$. If $|V(G_1')|=4$ then $|V(w_5'Cw_5)|=4$. So assume that there exists
     $u\in V(w_5'Cw_5)\setminus \{v_4,w_4, w_5,w_5'\}$.  If $u\in V(w_5'Cv_4)$
     then $uw_4,uw_5\in E(G)$ and
     $G$ has a 5-separation $(H_1,H_2)$ such that $V(H_1\cap H_2)
     =\{w_5, w_5',t_3,t_4,t_5\}$ and  $H_1-V(H_1\cap H_2)$ is the triangle $uv_4w_4u$,
     contradicting Lemma~\ref{5cuttri}.
       If $u\in V(v_5Cw_5)$ then $uw_5',uv_4\in E(G)$ and $G$ has a 5-separation $(H_1,H_2)$ such that $V(H_1\cap H_2)
     =\{w_5,w_5',t_3,t_4,t_5\}$ and $H_1-V(H_1\cap H_2)$ is the triangle $uv_4w_4u$, contradicting Lemma~\ref{5cuttri}.
    So $u\in V(v_4Cw_4)$ and, hence, $uw_5,uw_5'\in E(G)$. Let
     $w$ be the neighbor of $w_5$ on $w_5Cv_1$. Then $G$ has a 5-separation $(H_1,H_2)$ such that $V(H_1\cap H_2)
     =\{w,w_5',v_4,t_4,t_5\}$ and $H_1-V(H_1\cap H_2)$ is the triangle $uw_4w_5u$, again
     contradicting Lemma~\ref{5cuttri}.

     By symmetry, we may also assume that $|V(v_1Cv_1')|=4$.

    \medskip
      {\it Subcase} 1.1. $w_5'\ne v_1'$.

     Note that $G_1$ has a 4-separation $(G_1',G_1'')$ such that $V(G_1'\cap
     G_1'')=\{v_1,v_1',w_5,w_5'\}$, $w_5Cv_1\cup v_1'Cw_5'\subseteq G_1'$, and $V(G_1\cap G_2)\subseteq V(G_1'')$.
      If $|V(G_1')|\ge 6$ then by Lemma~\ref{4cut}, $G_1'$ contains a
      $V(G_1'\cap G_1'')$-good wheel, which is also a $V(G_1\cap G_2)$-good wheel in $G_1$.
      Thus, we may assume   $|V(G_1')|\le 5$.

     If $N_G(w_5')\cap V(w_5Cv_1-w_5)=\emptyset$ then, since $G$ is
     4-connected, $w_5'v_5\in E(G)$. Let $w'$ be the neighbor of $w_5'$
     in $v_1'Cw_5'$. Then $G$ has a 5-separation
     $(L_1,L_2)$ such that $V(L_1\cap L_2)=\{w_5,w',t_3,t_4,t_5\}$, $G_2\subseteq L_2$, and $L_1-V(L_1\cap L_2)$
     is the triangle $w_5'v_4v_5w_5'$, contradicting
     Lemma~\ref{5cuttri}.

     Thus, $N_G(w_5')\cap V(w_5Cv_1-w_5)\ne \emptyset$. Since $w_5'\ne
     v_1'$, it follows from the minimality of $v_1'Cw_5'$ that there
     exists $u\in w_5Cv_1-\{w_5,v_1\}$ such that $uw_5' \in
     E(G)$. Hence, since $|V(G_1')|\le 5$ and $G$ is 4-connected,
   $uv_1'\in E(G)$. Now  $G$ has a 5-separation
     $(L_1,L_2)$ such that $V(L_1\cap L_2)=\{v_1,v_1', v_4,v_5,t_5\}$, $G_2\subseteq L_2$, and $L_1-V(L_1\cap L_2)$
     is the triangle $uw_5'w_5u$, contradicting
     Lemma~\ref{5cuttri}.

 \medskip

     {\it Subcase} 1.2. $w_5'=v_1'$.

    Then, since $G$ is 4-connected, $|V(w_5Cv_1)|=2$.
    If $w_5v_4\in E(G)$ then $G$ has a 5-separation
    $(H_1,H_2)$ such that $V(H_1\cap H_2)=\{v_1,v_1',t_3,t_4,t_5\}$
    and $H_1-V(H_1\cap H_2)$ is the triangle $w_5v_4v_5w_5$, contradicting
    Lemma~\ref{5cuttri}. Hence, $w_5v_4\notin
    E(G)$. Similarly, $v_1v_3\notin E(G)$.

   Suppose $w_5'v_5\notin E(G)$ or $v_1'w_1\notin E(G)$. By symmetry,
   we may assume the former. Let
   $G':=G-\{v_4,v_5,w_5\}+\{w_5't_4,w_5't_5\}$. Then $G'$ has no
   $K_5$-subdivision;
    for, if $T$ is a $K_5$-subdivision in $G'$ then
   $(T-\{w_5't_4,w_5't_5\})\cup w_5'v_4t_4\cup w_5'w_5t_5$ (and, hence, $G$) would also contain a
   $K_5$-subdivision. So  let $\sigma$ be a 4-coloring
   of $G'$. By assigning $\sigma(w_5')$ to $v_5$ and
   greedily coloring $v_4,w_5$ in order, we obtain a 4-coloring
   of $G$, a contradiction.

	So $w_5'v_5,v_1'w_1\in E(G)$.  Let $G':=G-D+\{v, t_1t_5, vt_1, vt_2, vt_4, vt_5\}$, where $v$ is a new vertex.
   	We claim that $G'$ has no $K_5$-subdivision. For, suppose $T$ is a $K_5$-subdivision in $G'$, 
	then $(T-\{v, t_1t_5, vt_1, vt_2, vt_4, vt_5\}+{v_1'})\cup t_5w_5v_1t_1\cup v_1'v_2t_1\cup v_1'v_3t_2\cup v_1'v_4t_4 \cup v_1'v_5t_5$ (and, hence, $G$)
	would also contain a $K_5$-subdivision, a contradiction.

   Thus, let $\sigma $ be a 4-coloring of $G'$. We derive a contradiction by extending
   $\sigma$ to a 4-coloring of $G$ as follows: First, assign $\sigma(v)$ to
   both $v_2$ and $v_5$. Then greedily color $v_3,v_4,w_5'$ in
   order. If $w_5'$ receives $\sigma(t_5)$ then greedily color
   $v_1,w_5$ in order. Otherwise, assign $\sigma(t_5)$ to $v_1$ and
   greedily color $w_5$.

    \medskip

      {\it Case} 2.  $|\{i\in [5]: w_i\ne v_{i+1}\}|=0$.

       First, we show that $|V(v_iCw_i)|\le 3$ for $i\in       [5]$.
       For, suppose not and, by symmetry, assume that $|V(v_1Cw_1)|\ge 4$, and
       let $a,b\in V(v_1Cw_1)\setminus \{v_1,w_1\}$ be distinct such that
       $v_1,a,b,w_1$ occur on $C$ in clockwise order and $ab\in E(C)$.
        By planarity, $N_G(b)\cap V(v_4Ca-\{a,v_4\})=\emptyset$ or
        $N_G(a)\cap V(bCv_4-\{b,v_4\})=\emptyset$;
       so by symmetry, we may assume the former.
       Then, since $G$ is 4-connected, there exists $b'\in
       N_G(b)\cap V(v_2Cv_4-v_2)$ and choose $b'$ so that $b'Cv_4$ is minimal.
       Now $G_1$ has a 4-separation $(G_1',G_1'')$ such that $V(G_1'\cap
       G_1'')=\{a,v_2,v_3,b'\}$, $b\in V(G_1')\setminus V(G_1'')$,  and
       $V(G_1\cap G_2)\subseteq V(G_1'')$.  If $|V(G_1')|\ge 6$ then,
     by Lemma~\ref{4cut}, $G_1'$ contains a
      $V(G_1'\cap G_1'')$-good wheel, which is also a $V(G_1\cap G_2)$-good wheel in $G_1$. We may thus assume
       $|V(G_1')|=5$. Hence, $b'\in V(v_3Cv_4-v_3)$ and $bv_3\in
       E(G)$.
       Now $G$ has a 5-separation $(L_1,L_2)$ such that $|V(L_1\cap
       L_2)|=\{a, b',t_1,t_2,t_3\}$ and $L_1-V(L_1\cap L_2)$ is the
       triangle $bv_2v_3b$, contradicting   Lemma~\ref{5cuttri}.

       We may assume  that for $i\in [5]$, if $v_iv_{i+2}\in E(G)$ then
   $v_iCv_{i+2}=v_iv_{i+1}v_{i+2}$. (Here, we let $v_6=v_1$ and $v_7=v_2$.)  For, otherwise, $G_1$ has a
   4-separation $(G_1',G_1'')$ such that $V(G_1'\cap
   G_1'')=\{t_i,t_{i+1},v_i,v_{i+2}\}$, $|V(G_1')|\ge 6$, and
   $V(G_1\cap G_2)\subseteq V(G_1'')$. Hence,
     by Lemma~\ref{4cut}, $G_1'$ contains a
      $V(G_1'\cap G_1'')$-good wheel, which is also a $V(G_1\cap G_2)$-good wheel in $G_1$.

      Next, we claim that $G_2$ has a 4-coloring $\sigma$ with
      $|\sigma(\{t_i: i\in [5]\})|\le 3$. To see this, consider
      $G':=G-D+\{v,vt_i: i\in [5]\}$, where $v$ is a new vertex. If
      $G'$ contains no $K_5$-subdivision then $G'$ is 4-colorable; so
      the desired 4-coloring for $G_2$ exists. Thus, let $T$ be a
      $K_5$-subdivision in $G'$  and assume, without loss of
      generality, that $vt_5\notin E(T)$. Then
      $(T-v)\cup v_3v_2t_1\cup v_3t_2\cup v_3t_3\cup v_3v_4t_4$ and, hence, $G$
      contain a $K_5$-subdivision, a contradiction.

    We assume that the 4-coloring $\sigma$  of
   $G_2$ uses colors from $\{1,2,3,4\}$ and that $\sigma(\{t_i: i\in [5]\}) \subseteq \{1,2,3\}$.
    Note that  $0\le |\{i : i\in [5] \mbox{ and } v_iv_{i+1} \in E(G)\}|\le 5$.

\medskip

   {\it Subcase} 2.1. $|\{i: i\in [5] \mbox{ and } v_iv_{i+1} \in E(G)\}|=0$

    Then by the above assumption, $\{v_i: i\in [5]\}$ is an
    independent set in $G$. Since $D$ is outer planar, we may let $\sigma'$ be a
    3-coloring of $D$ using colors from $\{1,2,3\}$.  By changing
    $\sigma'(v_i)$ to $4$ for $i\in [5]$, we obtain a
    4-coloring $\sigma''$ of $D$ from $\sigma'$. Now $\sigma$ and
    $\sigma''$ form a $4$-coloring of $G$, a contradiction.

\medskip

  {\it Subcase} 2.2. $ |\{i: i\in [5] \mbox{ and } v_iv_{i+1} \in E(G)\}|=1$.

   Without loss of generality, let $v_1v_2\in E(G)$. Then by the above assumption, $\{v_2, v_3,v_4,v_5\}$ is an
    independent set in $G$.
   Take a 3-coloring $\sigma'$ of $D$ using colors from
   $\{1,2,3\}$ such that
   $\sigma'(v_1)\notin \{ \sigma(t_1), \sigma(t_5)\}$. Let $\sigma''$
   be the 4-coloring of $D$ obtained from $\sigma'$ by changing
   $\sigma'(v_i)$ to $4$ for $i=2,3,4,5$. Then we see that $\sigma$
   and $\sigma''$ form a 4-coloring of $G$, a contradiction.

\medskip

    {\it Subcase} 2.3.  $|\{i: i\in [5] \mbox{ and } v_iv_{i+1} \in E(G)\}|=2$.

     First, suppose for some $i \in [5]$,  $v_iv_{i+1},v_{i+1}v_{i+2}\in E(G)$. Without loss of generality, assume
    $i=5$. If $v_5v_2\notin E(G)$ then the argument for Subcase 2.2 also gives a 4-coloring of $G$, a contradiction.
     So $v_5v_2\in E(G)$. Then $G_1$ has a 4-separation $(G_1',G_1'')$ such
     that $V(G_1'\cap G_1'')=\{v_2,v_3,v_4,v_5\}$, $v_2Cv_5\subseteq G_1'$, and $V(G_1\cap G_2)\subseteq V(G_1'')$.
     Note that $|V(G_1')|\ge 6$. So
     by Lemma~\ref{4cut}, $G_1'$ contains a
      $V(G_1'\cap G_1'')$-good wheel, which is also a $V(G_1\cap G_2)$-good wheel in $G_1$.

     So we may assume without loss of generality that $v_1v_2,
     v_3v_4\in E(G)$. Let $a\in V(v_2Cv_3)\setminus \{v_2,v_3\}$,
    $b\in V(v_4Cv_5)\setminus \{v_4,v_5\}$, and $c\in
    V(v_5Cv_1)\setminus \{v_1,v_5\}$. Note the 5-separation $(H_1,H_2)$ in $G$ with
     $V(H_1)\cap V(H_2)=\{v_i: i\in [5]\}$ and $H_1=D-\{v_1v_2,v_3v_4\}$. Since
     $H_1-V(H_1\cap H_2)$ has three vertices, namely, $a,b$, and $c$,
     it must be a path by Lemma~\ref{5cut3vts}.

     Suppose $ab,ac\in E(G)$.  Then, since $G$ is 4-connected and $(G_1,V(G_1\cap G_2))$ is planar, $cv_2,bv_3\in E(G)$. Now $G$ has a
     separation $(H_1,H_2)$ such that
     $V(H_1\cap H_2)=\{a,t_1,t_2,t_5,v_5\}$ and $H_1-V(H_1\cap H_2)$ is
     the triangle $cv_1v_2c$, contradicting Lemma~\ref{5cuttri}.

    So by symmetry, let $ca,cb\in E(G)$, then $ab\not\in E(G)$. Then, since $G$ is
    4-connected and $(G_1,V(G_1\cap G_2))$ is planar,  $bv_3,av_1\in
    E(G)$. Then $G$ has a separation $(H_1,H_2)$ such that
     $V(H_1\cap H_2)=\{c, t_1,t_2,t_5,v_3\}$ and $H_1-V(H_1\cap H_2)$ is
     the triangle $av_1v_2a$, contradicting Lemma~\ref{5cuttri}.

\medskip

    {\it Subcase} 2.4.  $|\{i: i\in [5] \mbox{ and } v_iv_{i+1} \in E(G)\}|=3$.

    First, suppose for some $i\in [5]$, $v_iv_{i+1}, v_{i+1}v_{i+2}, v_{i+2}v_{i+3}\in E(G)$, where $v_6=v_1$, $v_7=v_2$, and $v_8=v_3$.
    Without loss of generality,
    let $i=3$. Let $a\in V(v_1Cv_2)\setminus \{v_1,v_2\}$ and $b\in V(v_2Cv_3)\setminus \{v_2,v_3\}$. Since $G$
    is 4-connected and $(G_1,V(G_1\cap G_2))$ is planar, $ab\in E(G)$ and   we may assume
    by symmetry between $a$ and $b$ that $bv_4\in E(G)$ and $bv_5\notin E(G)$.
         By assigning $\sigma(t_4)$ to $a$, the color 4 to $v_1,v_2,v_4$,
     and greedily coloring $v_5,v_3,b$ in order, we extend $\sigma$ to a 4-coloring of $G$; a contradiction as
    $G$ is a Haj\'{o}s graph.

   Thus, we may assume, without loss of generality, that $v_4v_5,v_5v_1,v_2v_3\in E(G)$. Then by the above assumption, $v_3v_5\notin E(G)$.  Let $a\in
   V(v_1Cv_2)\setminus \{v_1,v_2\}$ and $b\in V(v_3Cv_4)\setminus
   \{v_3,v_4\}$.  By the planarity of $G_1$ and the symmetry between $a$ and $b$, we may assume that $av_3\notin E(G)$.
    We can now extend $\sigma$ to a 4-coloring of $G$ by assigning the color 4 to $v_1,v_2,v_4$, and greedily coloring $v_5,v_3,b,a$ in order.
    This contradicts the assumption that $G$ is a
      Haj\'{o}s graph.  

\medskip

    {\it Subcase} 2.5. $|\{i: i\in [5] \mbox{ and } v_iv_{i+1} \in E(G)\}|=4$.

   Without loss of generality, we may assume that $|V(v_1Cv_2)|=3$ and
   let $a\in V(v_1Cv_2)\setminus \{v_1,v_2\}$.
   Since $G$ is 4-connected, $a$ is adjacent to at least two of $\{v_3,v_4,v_5\}$.

   We claim that $av_3,av_5\in E(G)$. For, otherwise, by symmetry, assume $av_3\notin E(G)$. Then $av_4,av_5\in E(G)$, and $G$ has a
   5-separation $(H_1,H_2)$ such that $V(H_1\cap H_2)=\{t_1,t_4,t_5,v_2,v_4\}$ and $H_1-V(H_1\cap H_2)$ is the triangle $v_1av_5v_1$, contradicting Lemma~\ref{5cuttri}.

    Moreover, $v_3v_5\in E(G)$. For, otherwise, we can extend $\sigma$ to a proper 4-coloring of $G$ by assigning the color 4 to  $v_1,v_2,v_4$ and
   greedily coloring $v_3,v_5, a$ in order, a contradiction.

   Then $av_4\notin E(G)$ by planarity. Let
   $G':=G-\{a,v_1,v_2\}
   +\{t_1v_3,t_1v_5\}$. Note that if $T$ is a
   $K_5$-subdivision in $G'$ then $(T-\{t_1v_3,t_1v_5\})\cup
   t_1v_1v_5\cup t_1v_2v_3$ (and, hence, $G$) contains a
   $K_5$-subdivision. Hence, $G'$ contains no
   $K_5$-subdivision.
    So let $\sigma'$ be a 4-coloring of $G'$. Now $\sigma'$ can be extended to a 4-coloring of $G$ by assigning $\sigma'(t_1)$ to $a$ and greedily coloring
   $v_1,v_2$ in order. This is a contradiction. 

\medskip

    {\it Subcase} 2.6.  $|\{i: i\in [5] \mbox{ and } v_iv_{i+1} \in E(G)\}|=5$.
    
	By planarity,  $|E(D)\setminus E(C)|\le 2$  and, by symmetry, we may assume $E(D)\setminus E(C)\subseteq \{v_4v_1,v_4v_2\}$.
    Let $G':=G-D+\{t_1t_3,t_3t_4,t_4t_1\}$. Note that if $T$ is a
    $K_5$-subdivision in $G'$ then $(T-\{t_1t_3,t_3t_4,t_4t_1\})\cup
    t_1v_2v_3t_3v_4t_4v_5v_1t_1$ (and, hence, $G$) contains a $K_5$-subdivision. So $G'$ contains no
    $K_5$-subdivision and, thus,
    has a 4-coloring, say $\sigma'$.

     If $\sigma'(t_2)=\sigma'(t_3)$ then by assigning $\sigma'(t_1)$ to
   $v_4$ and greedily coloring $v_5,v_1,v_2,v_3$ in order, we get a 4-coloring of $G$, a contradiction. So $\sigma'(t_2)\ne \sigma'(t_3)$. Similarly,
   $\sigma'(t_4)\ne \sigma'(t_5)$. Then by assigning $\sigma'(t_1), \sigma'(t_3), \sigma'(t_4)$ to $v_4,v_2,v_1$, respectively, and greedily coloring
   $v_3,v_5$ in order, we obtain a 4-coloring of $G$, a contradiction.
 \end{proof}

    We now complete the characterization of  all 5-separations $(G_1,G_2)$ in a Haj\'{o}s graph such that
    $(G_1,V(G_1\cap G_2))$ is planar and $G_1$ contains no $V(G_1\cap G_2)$-good wheel. We say that 
    $(G_1,V(G_1\cap G_2))$ is one of the graphs in  Figure~\ref{obstructions} if 
    $G_1$ is isomorphic to one of the graphs in Firgure~\ref{obstructions} and $V(G_1\cap G_2)$ correspond to the set $S$ there. 

  \begin{lem}\label{5cutobs}
    Let $G$ be a Haj\'{o}s graph and $(G_1,G_2)$ be a 5-separation in $G$ such that  $(G_1,V(G_1\cap G_2))$ is planar and $V(G_1\cap G_2)$ is independent in $G_1$. Then 
   $G_1$ contains a   $V(G_1\cap G_2)$-good wheel, or $(G_1,V(G_1\cap G_2))$ is  one of the graphs in Figure~\ref{obstructions}.
  \end{lem}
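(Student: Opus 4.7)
The plan is to combine Lemma~\ref{5cutdeg2} with careful case analysis to reduce $G_1$ to one of the six obstructions whenever it admits no good wheel. Write $V(G_1\cap G_2) = \{t_1,\ldots,t_5\}$ and $D := G_1 - V(G_1\cap G_2)$. By Lemma~\ref{5cutdeg2}, if every $t_i$ has at least two neighbors in $D$ then $G_1$ already contains a $V(G_1\cap G_2)$-good wheel, so we may assume some $t_i$, WLOG $t_1$, has at most one neighbor in $D$. The proof then proceeds by strong induction on $|V(G_1)|$.

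Using the independence of $V(G_1\cap G_2)$ in $G_1$ and the 4-connectivity of $G$ (Lemma~\ref{4conn}), the case when $t_1$ has no neighbor in $D$ (so $t_1$ is isolated in $G_1$) reduces to the 4-separation $(G_1-t_1,G_2)$: Lemma~\ref{4cut} produces a $V(G_1\cap G_2)$-good wheel when $|V(G_1)|\ge 7$, while for $|V(G_1)|=6$ the 4-connectivity forces $D$ to consist of a single vertex adjacent to $t_2,t_3,t_4,t_5$, matching the small obstruction in Figure~\ref{obstructions} whose central vertex has degree 4.

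So assume $N_G(t_1)\cap V(D) = \{v\}$. I would pass to the 5-separation $(H_1,H_2)$ of $G$ with $V(H_1\cap H_2) = \{v, t_2, t_3, t_4, t_5\}$, where $H_1 := G_1 - t_1$ and $H_2$ is obtained from $G_2$ by adding the vertex $v$ and the edge $vt_1$; planarity of $(H_1, V(H_1\cap H_2))$ follows by moving $v$ into the boundary slot vacated by $t_1$, and $|V(H_1)| = |V(G_1)|-1$. When $v$ has no neighbor in $\{t_2,\ldots,t_5\}$ in $G_1$, the new separator is independent in $H_1$, and the induction hypothesis applied to $(H_1,H_2)$ yields either a $V(H_1\cap H_2)$-good wheel (which is automatically a $V(G_1\cap G_2)$-good wheel in $G_1$, since $t_1\notin V(H_1)$) or an obstruction in Figure~\ref{obstructions} for $H_1$. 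In the latter case I would check, obstruction by obstruction, that $G_1$---obtained from $H_1$ by adding back the pendant $t_1$ attached to $v$---is either again one of the six obstructions, or is sparse enough that one may construct a $K_5$-subdivision-free auxiliary graph whose 4-coloring extends to a 4-coloring of $G$, contradicting the Haj\'os-minimality of $G$. When $v$ is adjacent to some $t_j$ with $j\ne 1$, planarity typically produces a cycle in $D$ incident to $v$ whose $V(G_1\cap G_2)$-vertices all lie in $N_G(v)$, yielding a $V(G_1\cap G_2)$-good wheel centered at $v$; otherwise, a smaller 4- or 5-separation inside $G_1$ triggers one of Lemmas~\ref{4cut}, \ref{5cuttri}, or \ref{5cut3vts}, the last of which identifies the 8-vertex obstruction.

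The main obstacle is the obstruction-by-obstruction case check in the independent subcase: one must carefully track the planar embedding and the cyclic order of $t_1,\ldots,t_5$ through each reduction, then verify for each of the six reconstructed configurations of $G_1$ that it either appears in Figure~\ref{obstructions} or admits a 4-coloring via judiciously inserted auxiliary edges. These coloring arguments follow the template of Lemmas~\ref{4cut-k2}, \ref{5cuttri}, and \ref{5cut3vts}, but ensuring exhaustive coverage across the six figure-1 configurations, especially when $v$ has nontrivial adjacency to $\{t_2,\ldots,t_5\}$, is the delicate combinatorial bookkeeping step.
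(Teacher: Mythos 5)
Your overall skeleton matches the paper's: invoke Lemma~\ref{5cutdeg2} to find a cut vertex with at most one neighbor in $D$, handle the small interior sizes via Lemmas~\ref{4cut}, \ref{5cuttri}, \ref{5cut3vts}, and reduce to a smaller 5-separation $(H_1,H_2)$ with $H_1=G_1-t_1$ and new separator $\{v,t_2,\dots,t_5\}$, using minimality/induction. But there are two genuine gaps at exactly the points where the real work lies. First, your induction only applies when $v$ has no neighbor among $t_2,\dots,t_5$, and your fallback for the other branch (``planarity typically produces a cycle in $D$ incident to $v$ whose $V(G_1\cap G_2)$-vertices all lie in $N_G(v)$, yielding a good wheel centered at $v$'') is not an argument: there is no reason the faces around $v$ close up into a wheel (for instance $v$ may be cofacial with the boundary of the disc), nor that the cut vertices on such a cycle are all neighbors of $v$. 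The paper avoids this branch entirely by a small but crucial device: it sets $H_1=G_1-x-\{yz: z\in V(G_1\cap G_2)\}$, i.e.\ it deletes the edges from $y=v$ to the old cut when forming $H_1$, so the new separator is \emph{always} independent in $H_1$ and the minimality hypothesis always applies; the deleted edges are then reintroduced as case distinctions ($qp\in E(G)$ or $qr\in E(G)$, etc.) in the subsequent analysis.

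Second, the hard core of the proof is the case in which the reduced side is the 9-vertex obstruction (in your setup, the reconstructions of $G_1$ from the 8- or 9-vertex obstruction plus the pendant $t_1$ that are not themselves obstructions). There your proposal only says one should ``construct a $K_5$-subdivision-free auxiliary graph whose 4-coloring extends to a 4-coloring of $G$,'' with no candidate graph and no reason it is subdivision-free. In the paper this step is the bulk of the proof: one takes $G':=G-\{q,u,v,w,z\}+\{rp,rt,pt,px,ts\}$, shows that a 4-coloring of $G'$ extends to $G$, and then rules out a $K_5$-subdivision $T$ in $G'$ by a four-case analysis of which of the added edges lie in $T$, rerouting them through the deleted wheel-like structure (sometimes replacing branch vertices $p,t$ by $z,w$). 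It also needs the degree condition $d_G(p)\ge 5$ supplied by Lemma~\ref{5cut3vts} and several auxiliary separations contradicting Lemmas~\ref{4cut} and \ref{5cuttri} to pin down where $y$ can sit. None of this is routine bookkeeping that can be left to ``the template of'' the earlier small-case lemmas, so as written the proposal does not yet constitute a proof.
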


\begin{proof}
          First, assume $|V(G_1)\setminus V(G_1\cap G_2)|=1$ and let $u\in V(G_1)\setminus V(G_1\cap G_2)$. Since $G$ is 4-connected,
          $|N_G(u)\cap V(G_1\cap G_2)|\ge 4$. Thus, $(G_1,V(G_1\cap G_2))$ is one of the two 6-vertex graphs in Figure~\ref{obstructions}.
         So we may assume that  $|V(G_1)\setminus V(G_1\cap G_2)|\ge
         2$. We may also assume that

    \begin{itemize}
     \item [(1)] each vertex in $V(G_1\cap G_2)$ must have a neighbor
          in $V(G_1)\setminus V(G_1\cap G_2)$.
    \end{itemize}
       For, otherwise, suppose $x\in V(G_1\cap G_2)$ and $x$ has no neighbor in
          $V(G_1)\setminus V(G_1\cap G_2)$. Then $G$ has a 4-separation $(L_1,L_2)$ such that $V(L_1\cap L_2)=V(G_1\cap G_2)\setminus \{x\}$,
          $L_1\subseteq G_1$ and $|V(L_1)|\ge 6$, and $G_2\subseteq L_2$. Thus,
     by Lemma~\ref{4cut}, $L_1$ contains a
      $V(L_1\cap L_2)$-good wheel, which is also a $V(G_1\cap G_2)$-good wheel in $G_1$. $\Box$

\medskip

     We may  further assume that
         \begin{itemize}
     \item [(2)]     $|V(G_1)\setminus V(G_1\cap G_2)|\ge 4$.
       \end{itemize}
       If  $|V(G_1)\setminus V(G_1\cap G_2)|= 3$ then,
       by
          Lemma~\ref{5cut3vts}, $(G_1,V(G_1\cap G_2))$ is the 8-vertex graph
         in Figure~\ref{obstructions}. Now suppose $|V(G_1)\setminus V(G_1\cap G_2)|=2$ and let
          $V(G_1)\setminus V(G_1\cap G_2)=\{u,v\}$.
          Since $(G_1, V(G_1\cap G_2))$ is planar and $G$ is 4-connected, $uv\in E(G_1)$ and $1\le |N_G(u)\cap N_G(v)|\le 2$.
          If $ |N_G(u)\cap N_(v)|= 2$ then $(G_1,V(G_1\cap G_2))$ is the first 7-vertex graph in Figure~\ref{obstructions}; and if
          $ |N_G(u)\cap N_G(v)|= 1$ then $(G_1,V(G_1\cap G_2))$ is the
          second 7-vertex graph in Figure~\ref{obstructions}.  $\Box$

       \medskip

     By  Lemma~\ref{5cutdeg2}, we may assume that
         there exists $x\in V(G_1\cap G_2)$ such that $x$ has exactly one neighbor in $V(G_1)\setminus V(G_1\cap G_2)$, say $y$.
          Let $H_1=G_1-x-\{yz: z\in V(G_1\cap G_2)\}$ and $H_2=G-(V(H_1)\setminus V(H_2))$. Then $(H_1,H_2)$ is a separation in $G$. 
            Let $V(H_1\cap H_2)=
                \{p,q,r,s,t\}$ and assume that $H_1$ is drawn in a closed disc in the plane with no edge crossings
                such that $p,q,r,s,t$ occur on the boundary of the disc in
                 clockwise order. Note that $y\in \{p,q,r,s,t\}$.
     We may  assume that

     \begin{itemize}
     \item [(3)]
          if $|V(G_1)\setminus V(G_1\cap G_2)|=4$ then $G_1$ is the 9-vertex graph in Figure~\ref{obstructions}.
      \end{itemize}
         Suppose $|V(G_1)\setminus V(G_1\cap G_2)|=4$.  Then
         $|V(H_1)\setminus V(H_1\cap H_2)|=3$ and,
               by Lemma~\ref{5cut3vts}, $(H_1, V(H_1\cap H_2))$ is the 8-vertex graph in Figure~\ref{obstructions}.
                Without loss of generality, let $d_{H_1}(p)=3$,
                $d_{H_1}(r)=d_{H_1}(s)=2$, and
                $d_{H_1}(q)=d_{H_1}(t)=1$. Then $d_G(p)\ge 5$ (by Lemma~\ref{5cut3vts}). Let $V(H_1)\setminus
                V(H_1\cap H_2)=\{u,v,w\}$ such that $quvwt$ is a path.

                First, consider the case $y\in \{q,t\}$. By symmetry, assume $y=q$. Since $G$ is 4-connected, $d_{G_1}(q)\ge 4$; so
                $qp,qr\in E(G)$ as $(G_1,V(G_1\cap G_2))$ is planar. Then $\{p,q,r,u,v\}$ induces a  $V(G_1\cap G_2)$-good wheel in $G_1$.

                Now assume $y\in \{r,s\}$ and, by symmetry, let $y=s$. We may assume  $rs\notin E(G)$, as otherwise, $\{p,r,s,u,v,w\}$ induces
                a $V(G_1\cap G_2)$-good wheel in $G_1$. 
                Hence $G$ has a separation $(L_1,L_2)$ such that
                $V(L_1\cap L_2)=\{p,t,v,x\}$, $G_2\subseteq L_2$, and  $|V(L_1)|=6$; which contradicts Lemma~\ref{4cut}.

                Thus, $y=p$. If $pq,pt\in E(G)$ then $G_1$ is the 9-vertex graph in Figure~\ref{obstructions}. So assume by symmetry that
                $pq\notin E(G)$. Then $G$ has a separation $(L_1,L_2)$ such that  $V(L_1\cap L_2)=\{r,s,t,u,x\}$ and $L_1-V(L_1\cap L_2)$ is the
                triangle $pvwp$, contradicting Lemma~\ref{5cuttri}.  $\Box$

 \medskip
               
		By (3), we may assume   $|V(G_1)\setminus V(G_1\cap G_2)|\ge 5$ and,
               subject to this, we may choose $(G_1,G_2)$ so that $G_1$ is minimal.
                   Then by the choice of $(G_1,G_2)$,
                   $|V(H_1)\setminus V(H_1\cap H_2)|=4$. So  by (3),
                $(H_1, V(H_1\cap H_2))$ is the 9-vertex graph in Figure~\ref{obstructions}.
                Without loss of generality,  let $d_{H_1}(p)=1$, and let $V(H_1)\setminus
                V(H_1\cap H_2)=\{u,v,w,z\}$ such that $d_{H_1}(z)=6$, $d_{H_1}(v)=5$, and  $quvwt$ is a path in $H_1$.

                If $y=p$ then, since $G$ is 4-connected and $(G_1,V(G_1\cap G_2))$ is planar, $pq,pt\in E(G)$; so $G[\{p,q,t,u,v,w,z\}]$ is a
                $V(G_1\cap G_2)$-good wheel in $G_1$. 
                 If $y\in \{r,s\}$ then by symmetry assume $y=r$; now  $rq\in E(G)$ or $rs\in E(G)$ as $G$ is 4-connected and $(G_1, V(G_1\cap G_2))$ is planar;
                 so  $G[\{q,r,u,v,z\}]$ or $G[\{r,s,u,v,w,z\}]$  is a
                $V(G_1\cap G_2)$-good wheel in $G_1$. 

                So we my assume $y\in \{q,t\}$ and, by symmetry, let $y=q$.
                Since $G$ is 4-connected and $(G_1,V(G_1\cap G_2))$ is planar, $qp\in E(G)$  or $qr\in E(G)$.
                If $qr\in E(G)$ then $G[\{q,r,u,v,z\}]$ is a $V(G_1\cap G_2)$-good in $G_1$. 
                  Hence, we may assume $qr\notin E(G)$ and $qp\in E(G)$.

              Let  $G':=G-\{q,u,v,w,z\}+\{rp,rt,pt,px,ts\}$.  If $G'$ has no $K_5$-subdivision then
                $G'$ has a 4-coloring, say  $\sigma$. By assigning $\sigma(r),\sigma(p),\sigma(t)$ to
                $z,u,v$, respectively, and greedily coloring $q,w$ in order, we obtain a 4-coloring of $G$, a contradiction.

              Hence, $G'$ has a $K_5$-subdivision, say $T$, and let
                 $T'=T-\{ rp,rt,pt,px,ts\}$.
                 
\medskip

            {\it   Case} 1.  $\{tp,tr,pr\}\subseteq E(T)$.

         Then $t,p,r$ must be branch vertices of $T$.
                If $ts,px\in E(T)$ then $T'$ and $wt,ws,wvr,wz,zp$, $zqx,zur$ from a $K_5$-subdivision in $G$
                (by replacing branch vertices $t,p$ with $w,z$, respectively), a contradiction.
                If $ts\in E(T)$ and $px\notin E(T)$ then  $T'\cup
                wt\cup ws\cup wvr\cup wzp\cup pqur$
                is a $K_5$-subdivision in $G$, a contradiction. If $ts\notin E(T)$ and $px\in E(T)$ then
                $T'\cup zp\cup zt\cup zqx\cup zur\cup twvr$ is a $K_5$-subdivision in $G$, a contradiction.
                If $ts,px\notin E(T)$ then $T'\cup tzpqurvwt$ is a $K_5$-subdivision in $G$, a contradiction.
                
\medskip

           {\it Case} 2.    $tp\notin E(T)$ for any choice of $T$. 

                Then  $p$ or $t$
                is not a branch vertex in $T$ as, otherwise, by replacing the path in $T$ between $p$ and $t$ with $pt$ we would obtain a
                $K_5$-subdivision in $G'$ containing $tp$. 

                             If $p$ is not a branch vertex of $T'$ then $T'\cup tws\cup
             tzvr\cup ruqx$ (when $rp,px\in E(T)$), or $T'\cup tws\cup
             tzvr\cup ruqp$ (when $px\notin E(T)$), or $T'\cup tws\cup
             tzvr\cup pqx$ (when $rp\notin E(T)$), contains a $K_5$-subdivision. So
             $G$ contains a $K_5$-subdivision, a contradiction.

             If $t$ is
             not a branch vertex of $T'$ then $T'\cup pqx\cup pzur\cup rvs$ (when $tr,ts\in E(T)$),
             or $T'\cup pqx\cup pzur\cup rvwt$ (when $ts\notin E(T)$),
             $T'\cup pqx\cup pzur\cup tws$ (when $tr\notin E(T)$), contains a $K_5$-subdivision.
              Hence, $G$ contains
             a $K_5$-subdivision, a contradiction.

\medskip

         {\it Case} 3.    $\{tp,ts,tr\}\subseteq E(T)$ or
           $\{pt,pr,px\}\subseteq E(T)$.

   If $\{tp,ts,tr\}\subseteq E(T)$ then we may assume $pr\notin E(T)$ by Case 1; thus $T'\cup wt\cup ws\cup
           wzp\cup wvr\cup pqx$ contains a $K_5$-subdivision (with branch vertex $t$ replaced by $w$), a contradiction. If
           $\{pt,pr,px\}\subseteq E(T)$ then we may assume $tr\notin E(T)$ by Case 1; thus $T'\cup zp\cup zt\cup
           zqx\cup zur\cup tws$ contains a $K_5$-subdivision (with branch vertex $p$ replaced by $z)$, a contradiction.

\medskip

            {\it Case} 4. $tp\in E(T)$, $|\{ts,tr\}\cap E(T)|\le 1$, and
            $|\{pr,px\}\cap E(T)|\le 1$.

      Then $\{rp,rt,pt,px,ts\}\cap E(T)$ is contained in one of the
      following paths: $stpx$, $stpr$, and $rtpx$. Now $T'\cup
      swtzpqx$, or $T'\cup swtzpqur$, or $T'\cup rvwtzpqx$ is a
      $K_5$-subdivision in $G$, a contradiction.                  
\end{proof}

\section{Conclusion}

It is clear that Theorem~\ref{main} follows from Lemmas~\ref{4cut} and \ref{5cutobs}.
Our motivation to prove Theorem~\ref{main} is to use wheels to construct subdivisions of $K_5$: Let $W$ be a wheel with center $w$ and
let $w_1,w_2,w_3,w_4$ be distinct neighbors of $w$ on the cycle $W-w$ in cyclic order. If $P_1,P_2$ are disjoint paths from $w_1,w_2$ to $w_3,w_4$, respectively, and internally disjoint from W, then $W\cup P_1\cup P_2$ form a subdivision of $K_5$.

We know from results in \cite{Yu06, KSC4} that every Haj\'{o}s graph has a 4-separation. Now suppose $(G_1,G_2)$ is a 4-separation
in $G$ with $V(G_i)\setminus V(G_{3-i})\ne \emptyset$ for $i\in [2]$. We have shown that if $(G_1, V(G_1\cap G_2))$ is planar and $|V(G_1)|\ge 6$ then $G_1$ contains a $V(G_1\cap G_2)$-good wheel, say $W$.
In a subsequent paper, we will show that we can extend $W$ to $V(G_1\cap G_2)$ by four disjoint paths from four distinct neighbors of $w$ on $W-w$ to $V(G_1\cap G_2)$. If $G_2$ has the right disjoint paths between the vertices of $V(G_1\cap G_2)$ then we can form a subdivision
of $K_5$ in $G$. If not then by Seymour's characterization of 2-linked graphs, we will see that $(G_2,V(G_1\cap G_2)$ is planar and, hence,
$G$ would be planar.

As a consequence, no Haj\'{o}s graph admits a 4-separation $(G_1,G_2)$ such that $|V(G_1)|\ge 6$ and $(G_1, V(G_1\cap G_2))$
is planar. This is an important step in modifying  the recent proof of the Kelmans-Seymour conjecture  in  \cite{KSC1, KSC2, KSC3,KSC4} to
make progress on the Haj\'{o}s conjecture; in particular, for the class of graphs containing
$K_4^-$ as a subgraph, where $K_4^-$ is the graph obtained from $K_4$ by removing an edge. The arguments in  \cite{KSC2, KSC3}
heavily depend on the assumption of 5-connectedness, and we wish to  replace such arguments with coloring arguments. For this to work, we need to first deal with 4-separations with a planar side, similar to the result in \cite{MY10} on 5-cuts with a planar side.

\end{document}